\documentclass[11pt,a4paper,reqno]{amsart}
\usepackage[margin=2cm]{geometry}
\usepackage{hyperref,amssymb,amsfonts,amsthm,amsrefs,mathtools,verbatim,microtype,bm}
\usepackage{stmaryrd}
\usepackage[shortlabels]{enumitem}
\usepackage{cleveref}

\newtheorem{Theorem}{Theorem}[section]
\crefname{Theorem}{Theorem}{Theorems}

\newtheorem{Proposition}[Theorem]{Proposition}
\crefname{Proposition}{Proposition}{Propositions}

\newtheorem{Lemma}[Theorem]{Lemma}
\crefname{Lemma}{Lemma}{Lemmas}

\newtheorem{Corollary}[Theorem]{Corollary}
\crefname{Corollary}{Corollary}{Corollaries}

\newtheorem*{Claim*}{Claim}

\theoremstyle{definition}
\newtheorem{Definition}[Theorem]{Definition}
\crefname{Definition}{Definition}{Definitions}

\crefname{Example}{Example}{Examples}

\crefname{Fact}{Fact}{Facts}

\crefname{Remark}{Remark}{Remarks}

\crefname{Conjecture}{Conjecture}{Conjectures}

\crefname{Question}{Question}{Questions}

\newtheorem*{Questions*}{Questions}
\newtheorem*{FQuestions*}{Further Questions}

\makeatletter
\newtheorem*{rep@theorem}{\rep@title}
\newcommand{\newreptheorem}[2]{%
\newenvironment{rep#1}[1]{%
 \def\rep@title{#2 \ref{##1}}%
 \begin{rep@theorem}}%
 {\end{rep@theorem}}}
\makeatother

\newreptheorem{theorem}{Theorem}

\DeclareFontFamily{U}{mathb}{\hyphenchar\font45}
\DeclareFontShape{U}{mathb}{m}{n}{
<-6> mathb5 <6-7> mathb6 <7-8> mathb7
<8-9> mathb8 <9-10> mathb9
<10-12> mathb10 <12-> mathb12
}{}
\DeclareSymbolFont{mathb}{U}{mathb}{m}{n}
\DeclareMathSymbol{\pprec}{\mathrel}{mathb}{"CE}
\DeclareMathSymbol{\ssucc}{\mathrel}{mathb}{"CF}

\DeclareFontFamily{U}{mathb}{\hyphenchar\font45}
\DeclareFontShape{U}{mathb}{m}{n}{
      <5> <6> <7> <8> <9> <10> gen * mathb
      <10.95> mathb10 <12> <14.4> <17.28> <20.74> <24.88> mathb12
      }{}
\DeclareSymbolFont{mathb}{U}{mathb}{m}{n}
\DeclareFontSubstitution{U}{mathb}{m}{n}
\DeclareMathSymbol{\monus}{2}{mathb}{"01}

\DeclareMathOperator{\dom}{\mathrm{dom}}

\DeclareMathOperator{\degT}{\mathrm{deg}_{\mathrm{T}}}
\DeclareMathOperator{\dgsp}{\mathrm{DgSp}}

\newcommand{\pa}{\mathrm{PA}}
\newcommand{\dnc}{\mathrm{DNC}}
\newcommand{\dnr}{\mathrm{DNR}}

\newcommand{\la}{\langle}
\newcommand{\ra}{\rangle}
\newcommand{\da}{{\downarrow}}
\newcommand{\ua}{{\uparrow}}
\newcommand{\imp}{\rightarrow}
\newcommand{\Imp}{\Rightarrow}

\newcommand{\Biimp}{\Leftrightarrow}

\newcommand{\iso}{\cong}

\newcommand{\Nb}{\mathbb{N}}
\newcommand{\Zb}{\mathbb{Z}}
\newcommand{\Qb}{\mathbb{Q}}

\newcommand{\smf}{\smallfrown}
\newcommand{\rst}{{\restriction}}
\newcommand{\keq}{\simeq}
\newcommand{\forae}{\forall^\infty}

\newcommand{\mc}[1]{\mathcal{#1}}
\newcommand{\mf}[1]{\mathfrak{#1}}

\newcommand{\geqT}{\geq_\mathrm{T}}

\newcommand{\std}{\mathrm{std}}
\newcommand{\nonstd}{\mathrm{nonstd}}

\AtBeginDocument{%
   \def\MR#1{}
}

\title[Tennenbaum-like theorems for cohesive powers]{Tennenbaum-like theorems for cohesive powers}

\author{David Gonzalez}
\address{Department of Mathematics\\
University of Notre Dame\\
Hurley Hall, 255 Hurley\\
Notre Dame, IN 46556\\
United States of America}
\email{dgonza42@nd.edu}
\urladdr{https://www.davidgonzalezlogic.com}

\author{Paul Shafer}
\address{School of Mathematics\\
University of Leeds\\
Leeds\\
LS2 9JT\\
United Kingdom}
\email{p.e.shafer@leeds.ac.uk}
\urladdr{https://peshafer.github.io}

\date{\today}

\begin{document}

\begin{abstract}
We investigate the encoding ability of the cohesive power construction.  We compute a graph $\mc{G}$ where the cohesive power $\prod_C \mc{G}$ of $\mc{G}$ by any $\Delta_2$ cohesive set $C$ has degree $0''$.  That is, $0''$ computes a presentation of $\prod_C \mc{G}$, and every presentation of $\prod_C \mc{G}$ computes $0''$.  We also compute a linear order $\mc{L}$ where no cohesive power of $\mc{L}$ has a computable presentation.  We accomplish this by ensuring that if $\mc{P}$ is a presentation of a cohesive power of $\mc{L}$, then $\mc{P}''$ has $\pa$-degree relative to $0''$. 
\end{abstract}

\maketitle

\section{Introduction}

We investigate the encoding ability of the cohesive power construction.  A \emph{cohesive power} is a miniature power of a computable structure $\mc{A}$ built from a cohesive subset of $\Nb$ and partial computable functions $\Nb \to |\mc{A}|$ instead of from an ultrafilter on $\Nb$ and all functions $\Nb \to |\mc{A}|$.  These powers are countable structures because there are only countably many partial computable functions. 
For this reason, they can be analyzed by traditional techniques from computable structure theory that focus on countable structures, as we do in this article.

Close relatives of cohesive powers of the standard model of arithmetic $\mc{N} = (\Nb; 0, 1, +, \times, <)$ have been widely studied for a long time.  Even Skolem's original construction of a countable non-standard model of arithmetic~\cite{Skolem} may be seen as a miniature ultrapower.  Skolem's model, and indeed all countable non-standard models of arithmetic, were shown to have no computable copies by Tennenbaum \cite{Ten51}.  Even more closely related to cohesive powers are the \emph{recursive ultrapowers} (aka \emph{Nerode semi-rings}), \emph{r.e.\ ultrapowers}, and similar constructions studied in works including~\cites{FefermanScottTennenbaum, HirschfeldModels, HirschfeldWheelerBook, ShavrukovPrimePowers, McLaughlinRearrangement, McLaughlinCollectionFails, McLaughlinSurvey, McLaughlinUltrapowersModelsExtensions, McLaughlinTotallyRigid, McLaughlinEmbeddings, LermanCo-r-Max}.  
Shavrukov~\cite{ShavrukovPrimePowers} observes that cohesive powers as studied here coincide with his \emph{r.e.\ prime powers}.
The idea to study powers of arbitrary computable structures via cohesive sets is more recent, originating with Dimitrov~\cite{DimitrovCohPow}.

{\L}o\'{s}'s theorem for cohesive powers (see \cref{thm-LosGeneral} below) only holds for sentences that are $\Delta_3$-expressible, and a cohesive power of $\mc{N}$ will not be a model of Peano arithmetic.  In fact, $\mathsf{B}\Sigma_1$ fails in cohesive powers of $\mc{N}$~\cite{ShavrukovPrimePowers}*{Proposition~2.11}.  Nevertheless, preserving $\Delta_3$ facts is enough for Tennenbaum's no-computable-copy theorem to apply.  To wit, Shavrukov~\cite{ShavrukovPrimePowers} shows that cohesive powers of $\mc{N}$ model the $\Pi_2$ theory of true arithmetic, and it follows that Tennenbaum's theorem applies.  We give the details using {\L}o\'{s}'s theorem for cohesive powers, which we explain more fully later on.

\begin{Theorem}[essentially~\cite{ShavrukovPrimePowers}*{Corollary~2.8}]
Let $\mc{N} = (\Nb; 0, 1, +, \times, <)$ be the standard model of arithmetic, and let $C \subseteq \Nb$ be a cohesive set.  Then the cohesive power $\prod_C \mc{N}$ of $\mc{N}$ over $C$ does not have a computable presentation.
\end{Theorem}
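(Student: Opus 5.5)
We plan to run Tennenbaum's argument inside $\prod_C \mc{N}$, using {\L}o\'{s}'s theorem for cohesive powers (\cref{thm-LosGeneral}) only to transfer $\Pi_2$ (hence $\Delta_3$-expressible) facts of true arithmetic. Recall that $\prod_C \mc{N}$ consists of $=_C$-classes of partial computable $\varphi$ with $C \subseteq^* \dom(\varphi)$. The constant functions give an embedding of $\mc{N}$ as an initial segment. The identity function $x \mapsto x$ gives an element above every standard number, so $\prod_C \mc{N}$ is nonstandard.

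The first step is to fix computably inseparable c.e.\ sets $A, B$ with enumerations $A = \bigcup_s A_s$ and $B = \bigcup_s B_s$. Consider the $\Pi_2$ sentence
\[
\forall n\, \exists c\, \forall i<n\, \bigl(p_i \mid c \leftrightarrow \exists s<n\,(i\in A_s)\bigr),
\]
where $p_i$ is the $i$-th prime and the matrix is $\Delta_0$ after coding the enumerations arithmetically. A slightly stronger version additionally requires $p_i \nmid c$ whenever $\exists s<n\,(i \in B_s)$; this is automatic because $A \cap B = \emptyset$. The sentence is true in $\mc{N}$, so it holds in $\prod_C \mc{N}$. Applying it with $n$ a nonstandard element yields some $c \in \prod_C \mc{N}$. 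By $\Delta_0$-absoluteness between $\mc{N}$ and its end extension on standard parameters, for each standard $i$ we get: if $i \in A$ then $p_i \mid c$, and if $i \in B$ then $p_i \nmid c$.

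The obstacle is making this transfer rigorous. Two things need care. First, the relevant formulas must be $\Delta_0$, or at least be evaluated correctly for standard $i$ together with a nonstandard bound $n$. Second, \cref{thm-LosGeneral} must genuinely give the $\Pi_2$ sentence in the cohesive power. For the second point, I would note that $\Pi_2$ sentences are within the $\Delta_3$ range covered there.

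Now suppose $\mc{P}$ is a computable presentation of $\prod_C \mc{N}$. Then the set $S = \{ i : \mc{P} \models p_i \mid c\}$ is computable. For each $i$, the standard numerals $p_i$ are computable in $\mc{P}$ by iterating the successor from $0$ and $1$, and $p_i \mid c$ is decided by searching for $q$ with $q\cdot p_i = c$ or $q\cdot p_i + r = c$ with $0<r<p_i$. One of these searches must succeed, by the $\Pi_2$ fact that division with remainder exists, transferred via {\L}o\'{s}. Thus $S$ is computable, $A \subseteq S$, and $S \cap B = \emptyset$, contradicting computable inseparability.

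Hence $\prod_C \mc{N}$ has no computable presentation.
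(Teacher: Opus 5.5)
Your argument is correct in outline, but it takes a different route from the paper. The paper uses \cref{lem-LosProdParamHelper}, together with the fact that $\Delta_0$ formulas are uniformly decidable in $\mc{N}$, to get that $\prod_C \mc{N}$ satisfies the $\Pi_2$ theory of true arithmetic. From this it concludes $\prod_C \mc{N} \models \mathsf{I}\Delta_0$, and it then cites Tennenbaum's theorem for nonstandard models of $\mathsf{I}\Delta_0$. You instead run Tennenbaum's coding argument directly inside $\prod_C \mc{N}$.

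What your route buys is that the one delicate ingredient of Tennenbaum for $\mathsf{I}\Delta_0$ disappears: there, the coding lemma has to be proved from $\Delta_0$-induction, whereas for you it is simply a true $\Pi_2$ sentence of $\mc{N}$ that transfers. So your proof is self-contained. It also yields more than the statement. True $\Pi_1$ facts about standard numbers also transfer (use \cref{lem-LosProdParamHelper} with constant functions as parameters). So for standard $i \notin A$, the cohesive power satisfies $\forall s\,\neg(i \in A_s)$, and no nonstandard $s<n$ can witness $\exists s<n\,(i\in A_s)$. Hence $p_i \mid c$ holds exactly when $i \in A$. The inseparable pair is therefore unnecessary, and taking $A = 0'$ shows that every presentation of $\prod_C \mc{N}$ computes $0'$. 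The paper's route is shorter and records a reusable structural fact, namely $\prod_C \mc{N} \models \mathsf{I}\Delta_0$.

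Three points in your write-up need repair.

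\emph{The transfer step cites the wrong result.} You justify it by saying that $\Pi_2$ sentences lie in the $\Delta_3$ range of \cref{thm-LosGeneral}. But that theorem concerns $\Sigma_n$/$\Pi_n$ formulas with quantifier-free matrix. In the language $(0,1,+,\times,<)$, your bounded quantifiers, the divisibility clause, and the coded enumeration are genuine quantifiers relativized by $<$. So your sentence is not $\Sigma_3$ in that sense, and \cref{thm-LosGeneral} does not apply. This is exactly the distinction the paper's proof flags first. The correct tool is item~\ref{it-LosProdPramPi2Helper} of \cref{lem-LosProdParamHelper}, applied to a $\forall\exists$ sentence whose matrix is uniformly decidable in $\mc{N}$, for example a $\Delta_0$ matrix.

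\emph{The matrix must really be $\Delta_0$, or at least uniformly decidable.} To get this, avoid $i \mapsto p_i$ inside the formula, since its graph is not obviously $\Delta_0$ (it involves counting). One fix is to replace $A$ and $B$ by $\{p_i : i \in A\}$ and $\{p_i : i \in B\}$, which are still c.e.\ and computably inseparable, and quantify over primes $p<n$. Alternatively, check uniform decidability of every subformula of your defining formula.

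\emph{The decision procedure needs exclusivity.} To decide $p_i \mid c$ in $\mc{P}$, the two searches must be mutually exclusive. That is, you need uniqueness of the remainder in $\prod_C \mc{N}$, which is again a true $\Pi_1$ fact that transfers.

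With these patches the proof goes through.
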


\begin{proof}
\emph{A priori}, when discussing arithmetic in this context and intending to apply a version of {\L}o\'{s}'s theorem for cohesive powers, one must take care to distinguish between the formula classes $\Sigma_n$ and $\Pi_n$, where the matrix formula is allowed bounded quantifiers, and the formula classes $\exists_n$ and $\forall_n$, where the matrix formula is quantifier-free.  However, $\Delta_0$ formulas in arithmetic are decidable in $\mc{N}$, and therefore \cref{lem-LosProdParamHelper} applies to them.  Specifically, \cref{lem-LosProdParamHelper} item~\ref{it-LosProdPramPi2Helper} implies that $\prod_C \mc{N}$ satisfies the $\Pi_2$ theory of true arithmetic.  It follows that $\prod_C \mc{N} \models \mathsf{I}\Delta_0$.  Tennenbaum's theorem applies to non-standard models of $\mathsf{I}\Delta_0$ (see, for example, \cite{KayeBook}*{Theorem~11.8}), and $\prod_C \mc{N}$ is non-standard because the identity function represents a non-standard element.  Therefore $\prod_C \mc{N}$ does not have a computable presentation.
\end{proof}

The present article is motivated by Tennenbaum's theorem and the investigations in~\cites{CohPowJournal, ShaferCohPowDelta2} concerning cohesive powers of computable copies of the linear order $(\Nb; <)$.  These works examine how the theory of cohesive powers deviates from the classical theory of ultrapowers, even for basic structures like $(\Nb; <)$.  For the usual presentation of $(\Nb; <)$, where the successor function is computable, and any cohesive set $C$, the cohesive power $\prod_C (\Nb; <)$ has order-type $\omega + \zeta\cdot\eta$.
 Here $\omega$, $\zeta$, and $\eta$ denote the order-types of $(\Nb; <)$, $(\Zb; <)$, and $(\Qb; <)$.
This is expected, as $\omega + \zeta\cdot\eta$ is the order type of any non-standard model of arithmetic and is elementary equivalent to $\omega$.  However, it is possible to compute a linear order $\mc{L}$ that is classically (but not computably) isomorphic to $(\Nb; <)$ where for some cohesive sets $C$, the cohesive power $\prod_C \mc{L}$ does not have order-type or even the same elementary class as $\omega + \zeta\cdot\eta$.  For example, there is a fixed computable copy $\mc{L}$ of $\omega$ such that $\prod_C \mc{L} \iso \omega + \eta$ for every $\Delta_2$ cohesive set $C$.  While this is perhaps surprising, it is not Tennenbaum-like; even though $\omega + \eta$ is quite different from $\omega$, it still has a computable copy.  More than just $\omega + \eta$ can be produced in this way, however. The most general theorem known in this direction is the following.

\begin{Theorem}[\cite{ShaferCohPowDelta2}*{Theorem~4.5}]\label{thm-BCSigma2ShuffleNew}
Let $X \subseteq \Nb \setminus \{0\}$ be a Boolean combination of $\Sigma_2$ sets, thought of as a set of finite order-types.  Then there is a computable copy $\mc{L}$ of $\omega$ such that for every $\Delta_2$ cohesive set $C$, the cohesive power $\prod_C \mc{L}$ has order-type $\omega + \bm{\sigma}(X \cup \{\omega + \zeta \cdot \eta + \omega^*\})$.  Moreover, if $X$ is finite and non-empty, then there is also a computable copy $\mc{L}$ of $\omega$ such that for every $\Delta_2$ cohesive set $C$, the cohesive power $\prod_C \mc{L}$ has order-type $\omega + \bm{\sigma}(X)$.
\end{Theorem}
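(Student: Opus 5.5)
This is a cited theorem from earlier work, so I only sketch how it could be proved. The plan is to build $\mc{L}$ as a computable copy of $\omega$ by a $0'$-guided approximation. The construction will arrange that, for every $\Delta_2$ cohesive $C$, the blocks of $\prod_C \mc{L}$ beyond the standard part are copies of the types in $X$, together with copies of $\omega + \zeta\cdot\eta + \omega^*$ in the first case. These blocks are to be densely shuffled with no endpoints.

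The basic tools are {\L}o\'s's theorem for cohesive powers on $\Delta_3$-expressible facts (\cref{thm-LosGeneral}) and the analysis of elements of $\prod_C \mc{L}$ as partial computable functions modulo $C$. The key idea is to control the successor relation of $\mc{L}$. Since $\mc{L}$ is computable, its successor relation is $\Pi_1$. We build $\mc{L}$ in stages, inserting new points between existing ones, so that $\mc{L}$ is partitioned in a $\Delta_2$ way into finite ``blocks'' of prescribed sizes. Block boundaries are broken by late insertions, which a cohesive power over a $\Delta_2$ set sees only in the limit.

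The steps are as follows.
\begin{enumerate}[(1)]
\item Write $X$ as a Boolean combination of $\Sigma_2$ sets. Fix a $\Delta_3$-style approximation such that $n \in X$ iff some guess about $n$ is eventually stable on a $\Delta_2$-cofinite-type set of stages.
\item Interleave the construction across all $n$ and all $\Delta_2$ cohesive sets. Because there is no effective list of the cohesive sets, handle them uniformly: ensure that for every partial computable $f$, the behaviour of the block containing $f(i)$, as $i$ ranges over any cohesive $C$, is decided by a $\Delta_3$ condition that {\L}o\'s transfers.
\item Show that each non-standard element lies in a block of a size in $X$, or in a limit block of type $\omega+\zeta\cdot\eta+\omega^*$ arising from blocks whose sizes grow unboundedly along $C$. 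Use cohesiveness to make ``size of block of $f(i)$ tends to $n$'' or ``tends to infinity'' decide along $C$.
\item Show that every type in $X$ (and the infinite type) occurs densely between any two blocks and beyond any block. To do this, construct partial computable functions picking, between $f(i)$ and $g(i)$, a fresh block of size $n$ at stage $i$; this uses the $\Delta_2$-ness of $C$ to guess membership.
\item For the ``moreover'' clause with finite nonempty $X$, refine the construction so that block sizes along any partial computable function are bounded. Since $X$ is finite, cap sizes by $\max X$, so no infinite-type blocks arise.
\end{enumerate}

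I expect the main obstacle to be step (3), specifically the $\Delta_2$-cohesive requirement. One must argue that an adversarial partial computable $f$ cannot land in blocks whose sizes oscillate along $C$ in a way that produces an order type outside the prescribed ones. This means verifying that the relevant property of $f$ is $\Delta_3$ in the right sense, so that \cref{thm-LosGeneral} and cohesiveness yield a definite limiting behaviour. Getting the Boolean-combination-of-$\Sigma_2$ complexity to match precisely what a $\Delta_2$ cohesive set can decide is the delicate point; the natural first attempt is a tree-of-strategies argument relative to $0'$.
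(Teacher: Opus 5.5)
The paper gives no proof of this statement; it is cited from earlier work. Judged on its own terms, your sketch has a genuine gap at its core.

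In steps~(2)--(3) you want the block type of a non-standard $[f]$ to be ``decided by a $\Delta_3$ condition that {\L}o\'{s} transfers.'' But \cref{thm-LosGeneral} concerns sentences only. For properties of elements, the only transfers are the one-directional $\Sigma_2$/$\Pi_2$ ones of \cref{thm-LosProdParam}, and these are best possible.

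More fundamentally, the phenomenon you must produce is itself a failure of {\L}o\'{s}. In $\mc{L} \iso \omega$ every element has a successor, yet the top of a finite block of $\prod_C \mc{L}$ must have none. ``$[\varphi]$ has no successor'' is a $\Pi_2$ property of $[\varphi]$ that fails at every coordinate, so nothing transfers it. By \cref{thm-LosProdParam} applied to the $\Pi_1$ successor formula, it holds in $\prod_C \mc{L}$ exactly when no partial computable $\psi$ satisfies $C \subseteq^* \{n : \psi(n) = \mathrm{succ}_{\mc{L}}(\varphi(n))\}$.

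Your use of cohesiveness in step~(3) fails for a similar reason. Cohesive sets only decide c.e.\ sets. Under your limit-defined segmentation, ``the segment of $\mc{L}$ containing $f(n)$ has size $q$'' is merely $\Delta_2$. A $\Delta_2$ cohesive $C$ is split into two infinite pieces by some $\Delta_2$ set (e.g.\ every other element of $C$), so nothing forces ``tends to $q$'' or ``tends to infinity'' along $C$.

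What is missing is the actual construction, which has to be a diagonalization rather than a transfer argument.
\begin{enumerate}[(a)]
\item For every pair $(\varphi,\psi)$ of partial computable functions, one must break adjacency between $\varphi(n)$ and $\psi(n)$ at intended block boundaries for almost every $n$ in the cohesive set. To get a single $\mc{L}$ for all $\Delta_2$ cohesive sets, this has to be done for every computable approximation to a $\Delta_2$ set; this is where $\Delta_2$-ness is really used, not in step~(4).
\item These insertions must be restrained so that every point of $\mc{L}$ keeps only finitely many predecessors, so that $\mc{L}$ stays of type $\omega$.
\item Block sizes must be bounded below by making the successor relation inside each intended segment c.e.\ (pairs the construction promises never to separate), so partial computable functions can climb within a segment via $\Sigma_1$ events that cohesiveness does decide.
\item Block sizes must be bounded above by the diagonalization in (a).
\item Density needs explicit partial computable witnesses.
\end{enumerate}

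Your sketch also never uses the hypothesis that $X$ is a Boolean combination of $\Sigma_2$ sets. A generic ``$\Delta_3$-style approximation'' would, if it worked, give the result for every $\Delta_3$ set, which is more than the theorem claims (compare Question~\ref{q-Pi3}). How that complexity bound interacts with the injury caused by the boundary-breaking insertions is the heart of the proof, and your proposal only names it as the expected obstacle.
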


\noindent
Here, $\bm{\sigma}$ denotes the \emph{shuffle sum}, and $\omega^*$ denotes the reverse of $\omega$.  The shuffle sum $\bm{\sigma}(\{\mc{L}_i\}_{i \in \Nb})$ of a countable collection of linear orders is obtained by coloring $(\Qb; <)$ with countably many colors in such a way that every color is dense and then replacing each point of color $i$ by a copy of order $\mc{L}_i$.

It is natural to wonder if any of these constructions are Tennenbaum-like, in other words, if any of these linear orders fail to have computable copies.  A \emph{block} in a linear order $\mc{L}$ is a maximal set $B \subseteq |\mc{L}|$ such that whenever $a$ and $b$ are in $B$ with $a \prec_\mc{L} b$, the interval $[a, b]$ is finite.  For a computable linear order $\mc{L}$, the set $\{n : \text{$\mc{L}$ has a block of size $n$}\}$ is $\Sigma_3$.  If we identify $X \subseteq \Nb \setminus \{0\}$ with the corresponding set of finite order-types, then $X$ is exactly the sizes of the finite blocks of $\omega + \bm{\sigma}(X \cup \{\omega + \zeta\cdot\eta + \omega^*\})$.  If $X$ is a Boolean combination of $\Sigma_2$ sets as in \cref{thm-BCSigma2ShuffleNew}, or indeed even a $\Sigma_3$ set, then $\bm{\sigma}(X \cup \{\omega + \zeta\cdot\eta + \omega^*\})$ and hence $\omega + \bm{\sigma}(X \cup \{\omega + \zeta\cdot\eta + \omega^*\})$ has a computable copy by (a slight modification of) \cite{AshJockuschKnight}*{Lemma~2.1}.  Indeed, all of the order-types produced by \cref{thm-BCSigma2ShuffleNew} have computable copies, so are not Tennenbaum-like.  On the other hand, if $X$ is $\Pi_3$ but not $\Sigma_3$, then $\omega + \bm{\sigma}(X \cup \{\omega + \zeta\cdot\eta + \omega^*\})$ is not a computable order-type.  This situation raises several questions.

\begin{Questions*}\
\begin{enumerate}[(1)]
\item\label{q-Pi3} In \cref{thm-BCSigma2ShuffleNew}, can `a Boolean combination of $\Sigma_2$ sets' be improved to `a $\Pi_3$ set?'

\medskip

\item\label{q-omega} Is there a computable copy $\mc{L}$ of $\omega$ where the cohesive power $\prod_C \mc{L}$ has no computable presentation for some cohesive set $C$?  For all cohesive sets $C$?

\medskip

\item\label{q-lo} Is there a computable linear order $\mc{L}$ (not necessarily of order-type $\omega$) where the cohesive power $\prod_C \mc{L}$ has no computable presentation for some cohesive set $C$?  For all cohesive sets $C$?

\medskip

\item\label{q-enc} More generally, what is the encoding strength of the cohesive power construction?
\end{enumerate}
\end{Questions*}

We answer Question~\ref{q-lo} and address the open-ended Question~\ref{q-enc}.  Questions~\ref{q-Pi3}~and~\ref{q-omega} remain open.

These questions are of particular interest because of the special place that linear orders have in computable structure theory.  Linear orders are a rich class of structures where a lot of complicated constructions are possible.  More formally, linear orders are on top for Borel reducibility, so any other structure can be transformed into a linear order in a concrete way that preserves isomorphism \cite{FS89}.  In contrast, unlike ultra-rich structures like graphs or groups, linear orders are unable to computably interpret every other type of structure \cites{Gao01, GHT}.  This means that any transformation of a structure into a linear order necessarily destroys some computability-theoretic information about the structure in the process.  Linear orders are complicated enough to be interesting but not so complicated that you can get away with an argument that focuses on coding a different type of structure.  They merit individual consideration.  Linear orders are an ideal place to test the understanding of new ideas and push the sophistication of the known constructions.

The study of what degrees can compute a structure, or the degree spectrum of a structure, is quite important in computable structure theory (see~\cite{MontalbanBookWithin}*{Chapter~V}).  Unsurprisingly, the degree spectra of linear orders have also been the specific subject of study in works such as~\cites{Ric81, Kni86, JockuschSoareLO, AshJockuschKnight, Mil01}.  By addressing the questions regarding cohesive powers of linear orders, this article falls into this tradition.  In particular, we demonstrate a new way to construct linear orders that do not have computable copies and therefore have interesting degree spectra.

This article is organized as follows.  \Cref{sec-bak} presents some background material on computable structures and cohesive powers.  \Cref{sec-graph} addresses Question~\ref{q-enc}.  We show that there is a computable structure that can smuggle arbitrary information into its cohesive powers via the cohesive sets (\cref{prop-EncodeAll}).  However, if one restricts to $\Delta_n$ cohesive sets (for $n \geq 2$), then the cohesive power of a computable structure over a $\Delta_n$ cohesive set always has a $\Delta_{n+1}$ presentation (\cref{prop-Delta3Pres}).  Conversely, there is a computable graph $\mc{G}$ such that for every cohesive set $C$, every presentation of $\prod_C \mc{G}$ computes $0''$ (\cref{thm-MaxGraphPow}).  For this graph $\mc{G}$ and any $\Delta_2$ cohesive set $C$, the cohesive power $\prod_C \mc{G}$ therefore has degree $0''$:  $0''$ computes a presentation of $\prod_C \mc{G}$, and every presentation of $\prod_C \mc{G}$ computes $0''$.  \Cref{sec-lo} answers Question~\ref{q-lo}.  We compute a linear order $\mc{L}$ such that $\prod_C \mc{L}$ has no computable presentation for any cohesive set $C$.  In fact, $\mc{L}$ has the property that for every cohesive set $C$, the double-jump $\mc{P}''$ of every presentation $\mc{P}$ of $\prod_C \mc{L}$ has $\pa$-degree relative to $0''$ (\cref{thm-LONonRec}).  Finally, \Cref{sec-q} raises a few further questions.

\section{Background}\label{sec-bak}

We refer the reader to~\cites{LermanBook,SoareBookRE} for computability theory in general and to~\cites{AshKnightBook,MontalbanBookWithin,MontalbanBookBeyond} for computable structure theory in particular.  We also refer the reader to~\cite{RosBook} for the theory of linear orders.

\subsection*{Computability notation and definitions}

Partial computable (aka partial recursive) functions $\Nb \to \Nb$ are denoted by $\varphi$, $\psi$, etc.  For a partial computable $\varphi : \Nb \to \Nb$, $\varphi(n)\da$ means that $\varphi$ halts on input $n$, and $\varphi(n)\ua$ means that $\varphi$ does not halt on input $n$.  Let $\dom(\varphi) = \{n : \varphi(n)\da\}$.  A set $W \subseteq \Nb$ is \emph{computably enumerable} (aka \emph{recursively enumerable}) if $W = \dom(\varphi)$ for some partial computable $\varphi$.  The c.e.\ sets are also exactly the sets that are $\Sigma_1$-definable in arithmetic.  Let $(\varphi_e)_{e \in \Nb}$ denote the usual effective enumeration of all partial computable functions $\Nb \to \Nb$.  Sometimes we also write $\varphi_0, \dots, \varphi_{m-1}$ to denote a sequence of $m$ partial computable functions; the usage will be clear from context.  For each $n \geq 2$, $\la x_0, \dots, x_{n-1} \ra \colon \Nb^n \imp \Nb$ denotes the usual computable bijective $n$\nobreakdash-tupling function, and for each $i < n$, $\pi_i$ denotes the projection function onto coordinate $i$.  For $\sigma, \tau \in \Nb^{<\Nb}$ and $f \in \Nb^\Nb$, $\sigma \sqsubseteq \tau$ denotes that $\sigma$ is an initial segment of $\tau$, and $\sigma \sqsubseteq f$ denotes that $\sigma$ is an initial segment of $f$.  For $f \in \Nb^\Nb$ and $n \in \Nb$, $f \rst n = (f(0), f(1), \dots, f(n-1))$ denotes the initial segment of $f$ of length $n$.  We identify $\mc{P}(\Nb)$ with $2^\Nb$ and often view their elements as infinite binary sequences.

For sets $A$ and $B$, $A \subseteq^* B$ means that $A \setminus B$ is finite:  $A$ is a subset of $B$ except for finitely many exceptions.  A set $C \subseteq \Nb$ is \emph{cohesive} if for every c.e.\ set $W$, either $C \subseteq^* W$ or $C \subseteq^* \Nb \setminus W$.  A cohesive set cannot be $\Sigma_1$, but a famous theorem of Friedberg (see~\cite{SoareBookRE}*{Theorem~X.3.3}) guarantees that there are $\Pi_1$ cohesive sets.

\Cref{sec-lo} makes use of $\pa$ degrees and separating sets.  A function $f \colon \Nb \to 2$ is $\dnc_2$ (aka $\dnr_2$) relative to a function $g \colon \Nb \to \Nb$ if $\forall e\, (\varphi_e^g(e)\da \,\imp\, f(e) \neq \varphi_e^g(e))$.  A function $f \colon \Nb \to \Nb$ is said to have \emph{$\pa$-degree} relative to $g \colon \Nb \to \Nb$ if $f$ computes a function that is $\dnc_2$ relative to $g$.  The terminology is explained by the classical fact that $\dnc_2$ functions compute complete consistent extensions of Peano arithmetic and \emph{vice versa}.  If $A, B \subseteq \Nb$ are disjoint sets, then an \emph{$AB$-separator} is a set $S \subseteq \Nb$ such that $A \subseteq S \subseteq \Nb \setminus B$.  For any $g \colon \Nb \to \Nb$, let $A = \{e : \varphi_e^g(e) = 0\}$ and $B = \{e : \varphi_e^g(e) = 1\}$.  Then $A$ and $B$ are disjoint sets that are c.e.\ relative to $g$, and the (characteristic functions of the) $AB$-separators are exactly the functions that are $\dnc_2$ relative to $g$.  Thus a function has $\pa$-degree relative to $g$ if and only if it computes an $AB$-separator for this $A$ and $B$.

\subsection*{Computable structure theory notation and definitions}

Fix a computable language $\mf{L}$.  Following~\cites{MontalbanBookWithin,MontalbanBookBeyond}, a \emph{presentation} or \emph{copy} of a countable $\mf{L}$-structure $\mc{M}$ is an isomorphic copy $\mc{A}$ of $\mc{M}$ whose domain is a (non-empty) subset of $\Nb$.  In this case, $\mc{A}$ can be encoded as the object
\begin{align*}
\mc{P} \;=\; |\mc{A}| \;\oplus\; \bigoplus_{R \in \mf{L}_{\mathsf{rel}}} R^{\mc{A}} \;\oplus\; \bigoplus_{F \in \mf{L}_{\mathsf{func}}} F^{\mc{A}} \;\oplus\; \bigoplus_{R \in \mf{L}_{\mathsf{const}}} c^{\mc{A}}
\end{align*}
where $|\mc{A}| \subseteq \Nb$ is the domain of $\mc{A}$, and $\mf{L}_{\mathsf{rel}}$, $\mf{L}_{\mathsf{func}}$, and $\mf{L}_{\mathsf{const}}$ are the relation, function, and constant symbols of $\mf{L}$.

If $\mc{A}$ is an $\mf{L}$-structure with $|\mc{A}| \subseteq \Nb$, then we typically identify $\mc{A}$ with its encoding $\mc{P}$, which itself can be identified with a subset of the natural numbers using the $n$-tupling function. 
Thus we say that $\mc{A}$ is \emph{computable} if $\mc{P}$ is computable.  So a computable $\mf{L}$-structure consists of a non-empty computable domain and uniformly computable interpretations of the relation, function, and constant symbols of $\mf{L}$.  Similarly, a \emph{uniformly computable sequence of $\mf{L}$-structures} $(\mc{A}_n : n \in \Nb)$ consists of a uniformly computable sequence $(|\mc{A}_n| : n \in \Nb)$ of non-empty domains, along with uniformly computable interpretations of all the symbols of $\mf{L}$ in the structures $(\mc{A}_n : n \in \Nb)$. All of these concepts readily relativize.

For a countable structure $\mc{A}$ in a computable language, the \emph{degree spectrum} of $\mc{A}$, denoted $\dgsp(\mc{A})$, is the collection of Turing degrees that can compute a copy of $\mc{A}$.  If the degree spectrum of $\mc{A}$ has a least element, then that is called the \emph{degree} of $\mc{A}$.  So $\mc{A}$ has degree $\degT(X)$ if $\mc{A}$ has an $X$-computable presentation and every presentation of $\mc{A}$ computes $X$. If no such degree exists, we say that $\mc{A}$ has no degree.
It is a result of Richter \cite{Ric81} that if $\mc{L}$ is a linear order, then the only possible degree of $\mc{L}$ is $\mathbf{0}$.

\begin{Theorem}[\cite{Ric81}]\label{richter}
If $\mc{L}$ is a linear order, then $\dgsp(\mc{L})$ has a minimum element if and only if $\mc{L}$ has a computable copy.
In particular, $\mc{L}$ has a degree if and only if it has a computable copy.
\end{Theorem}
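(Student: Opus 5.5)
The plan is to prove the nontrivial direction by contradiction. If $\mc{L}$ has a computable copy, then $\mathbf{0}$ is the minimum of $\dgsp(\mc{L})$. For the converse, suppose $\mc{L}$ has no computable copy but $\dgsp(\mc{L})$ has a minimum $\mathbf{d}$. Then $\mathbf{d} \neq \mathbf{0}$, so we fix a non-computable set $D \in \mathbf{d}$ and a presentation $\mc{A}$ of $\mc{L}$ of degree $\mathbf{d}$. It suffices to build a presentation $\mc{B} \leq_{\mathrm{T}} \mc{A}$ with $D \not\leq_{\mathrm{T}} \mc{B}$. Then $\degT(\mc{B})$ lies in $\dgsp(\mc{L})$ but is not above $\mathbf{d}$, which is the contradiction. (More generally, the construction should give, for every copy $\mc{A}$ and every non-computable $D$, a copy $\mc{B} \leq_{\mathrm{T}} \mc{A}$ not computing $D$.) The second sentence of the theorem then follows immediately, since a degree is by definition a minimum of the spectrum.

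We build $\mc{B}$ by finite-extension forcing relative to $\mc{A}$. A condition is a finite injective map $p$ from an initial segment $\{0,\dots,k-1\}$ of $\Nb$ into $|\mc{A}|$. It determines a finite linear order $\tau_p$ on $\{0,\dots,k-1\}$, namely $i <_{\tau_p} j$ iff $p(i) <_{\mc{A}} p(j)$. The generic bijection $f = \bigcup_s p_s$ defines $\mc{B}$ by $i <_{\mc{B}} j \iff f(i) <_\mc{A} f(j)$. We interleave two kinds of steps. First, at even steps we put the least element of $|\mc{A}|$ not yet in the range of $f$ into the range, which makes $f$ onto. Second, at odd steps we meet the requirement $R_e\colon \Phi_e^{\mc{B}} \neq D$. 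Each step is carried out $\mc{A}$-effectively, possibly with the help of a finite amount of noneffective information, so that $\mc{B} \leq_{\mathrm{T}} \mc{A}$ and $\mc{B} \iso \mc{L}$.

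For $R_e$ at a condition $p$, we ask whether there exist $n$ and two \emph{abstract} finite linear orders $\tau, \tau'$ on initial segments of $\Nb$ that end-extend $\tau_p$ as orders on $\{0,\dots,k-1\}$, such that $\Phi_e^{\tau}(n)\da \neq \Phi_e^{\tau'}(n)\da$. Here we use the usual convention that the oracle queries of these computations stay inside the domains of $\tau$ and $\tau'$. If there is no such splitting, then $D$ is not equal to $\Phi_e^{\mc{B}}$ for any $\mc{B}$ extending $p$. Indeed, to compute $D(n)$ we would search through all finite linear orders extending $\tau_p$ for one making $\Phi_e(n)$ converge, and by non-splitting every such answer agrees with $\Phi_e^{\mc{B}}(n)$ for the true $\mc{B}$. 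This would make $D$ computable from the finite object $\tau_p$, contradicting that $D$ is non-computable. So in this case $R_e$ is met automatically. If a splitting exists, then one of $\Phi_e^{\tau}(n)$ and $\Phi_e^{\tau'}(n)$ differs from $D(n)$, and we want to extend $p$ to realize that order inside $\mc{A}$.

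The main obstacle is exactly this realizability step. An abstract extension $\tau$ of $\tau_p$ may demand new points in an interval $(p(i),p(j))$ of $\mc{A}$ that is finite or empty, or points beyond an endpoint of $\mc{L}$. For a general structure this is why the argument fails, and it is where the linear-order hypothesis has to enter. My first attempt would be to make conditions more flexible. Instead of fixing the image map, a condition is only a finite order $\tau_p$ together with the promise that it embeds into $\mc{L}$ so that the \emph{finitely many} intervals it determines have prescribed sizes (finite $m$ or ``infinite''). Extensions are then restricted to orders that respect these interval data, which are finitely many noneffective facts about $\mc{L}$. Non-splitting over this restricted class still computes $D$ from finite data, because respecting given interval sizes is a decidable property of finite orders. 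A splitting now yields an extension that genuinely embeds into $\mc{L}$, and the $\mc{A}$-computable search can locate its images, or update to the correct interval data, in the limit with only finitely many changes per requirement. I expect the bookkeeping here to need care: keeping $\mc{B} \leq_{\mathrm{T}} \mc{A}$ rather than merely $\leq_{\mathrm{T}} \mc{A}'$, for instance by committing to images only when forced and otherwise delaying. This bookkeeping is where the hypothesis that $\mc{L}$ has no computable copy, and hence is infinite and not trivially describable by finitely many block sizes, would be used.
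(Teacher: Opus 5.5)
The paper does not prove this theorem; it quotes it from Richter. Judged on its own, your proposal has the right key observation, but it is not yet a proof. The problem is the target of the construction, which is both unnecessary and, in the generality you state, unattainable.

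You insist on $\mc{B} \leq_{\mathrm{T}} \mc{A}$, and you assert that for \emph{every} copy $\mc{A}$ and every non-computable $D$ there is a copy $\mc{B} \leq_{\mathrm{T}} \mc{A}$ with $D \not\leq_{\mathrm{T}} \mc{B}$. That is false. Miller's linear order~\cite{Mil01} has no computable copy but has a copy in every non-computable $\Delta_2$ degree. Hence it has a copy $\mc{A}$ of some minimal degree $\mathbf{m} < \mathbf{0}'$. Take $D = \mc{A}$: every copy $\mc{B} \leq_{\mathrm{T}} \mc{A}$ is non-computable, so it has degree $\mathbf{m}$ and computes $D$.

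So the bookkeeping you defer cannot succeed by any argument that ignores the minimality hypothesis. It is also not needed. If $\mathbf{d}$ is the minimum of $\dgsp(\mc{L})$, then \emph{every} copy computes $D$. So any copy $\mc{B}$ at all, of arbitrary complexity, with $D \not\leq_{\mathrm{T}} \mc{B}$ already gives the contradiction. Similarly, the hypothesis that $\mc{L}$ has no computable copy is used only to make $D$ non-computable, not in any bookkeeping.

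Once the effectivity requirement is dropped, your repair works, with one change: keep the image map rather than discarding it. Conditions that record only a finite order and interval sizes do not control the isomorphism type of the generic, and your even steps need the map. So take conditions to be finite injective $p$ as in your second paragraph. The key fact is:
\begin{itemize}
\item an abstract end-extension $\tau$ of $\tau_p$ is realized by some injective extension of $p$ if and only if, for each of the $k+1$ intervals of $\mc{A}$ cut out by $\mathrm{ran}(p)$, $\tau$ places no more new points there than that interval's size in $\omega \cup \{\infty\}$;
\item hence the set of realizable extensions is decidable from finitely many parameters.
\end{itemize}
For $R_e$, search for a splitting among realizable extensions only.
\begin{itemize}
\item If there is none, then $\Phi_e^{\mc{B}} = D$ would make $D$ computable from $\tau_p$ and these parameters. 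This uses that each $\mc{B} \rst m$ with $m \geq k$ is realizable over $p$, since the final bijection extends $p$. So $R_e$ holds automatically.
\item If there is a splitting, realize the branch whose computation disagrees with $D(n)$.
\end{itemize}
Together with your even steps, this gives $\mc{B} \iso \mc{L}$ with $D \not\leq_{\mathrm{T}} \mc{B}$. This is essentially the standard proof of Richter's theorem.
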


\subsection*{Cohesive powers and products}

Cohesive products and cohesive powers of computable structures are introduced by Dimitrov~\cite{DimitrovCohPow}.  We give the definition as it is presented in~\cite{CohPowJournal}.

\begin{Definition}\label{def-CohProd}
Let $\mf{L}$ be a computable language.  Let $(\mc{A}_n : n \in \Nb)$ be a uniformly computable sequence of $\mf{L}$-structures with corresponding uniformly computable sequence of non-empty domains $(|\mc{A}_n| : n \in \Nb)$.  Let $C \subseteq \Nb$ be a cohesive set.  The \emph{cohesive product of $(\mc{A}_n : n \in \Nb)$ over $C$} is the $\mf{L}$-structure $\prod_C \mc{A}_n$ defined as follows.

\begin{itemize}
\item Let $D$ be the set of partial computable functions $\varphi$ such that $\forall n \, (\varphi(n)\da \,\imp\, \varphi(n) \in |\mc{A}_n|)$ and $C \subseteq^* \dom(\varphi)$.

\medskip

\item For $\varphi, \psi \in D$, let $\varphi =_C \psi$ denote $C \subseteq^* \{n : \varphi(n)\da = \psi(n)\da\}$.  The relation $=_C$ is an equivalence relation on $D$.  Let $[\varphi]$ denote the equivalence class of $\varphi \in D$ with respect to $=_C$.

\medskip

\item The domain of $\prod_C \mc{A}_n$ is the set $|\prod_C \mc{A}_n| = \{[\varphi] : \varphi \in D\}$.

\medskip

\item 
Let $R$ be an $m$-ary relation symbol of $\mf{L}$.  For each $[\varphi_0], \dots, [\varphi_{m-1}] \in |\prod_C \mc{A}_n|$, define $R^{\prod_C \mc{A}_n}([\varphi_0], \dots, [\varphi_{m-1}])$ by
\begin{align*}
R^{\prod_C \mc{A}_n}([\varphi_0], \dots, [\varphi_{m-1}]) \;\;\Biimp\;\; C \subseteq^* \bigl\{n : R^{\mc{A}_n}(\varphi_0(n), \dots, \varphi_{m-1}(n))\bigr\}.
\end{align*}
Here, $R^{\mc{A}_n}(\varphi_0(n), \dots, \varphi_{m-1}(n))$ includes the condition that $\varphi_i(n)\da$ for each $i < m$.

\medskip

\item Let $f$ be an $m$-ary function symbol of $\mf{L}$.  For each $[\varphi_0], \dots, [\varphi_{m-1}] \in |\prod_C \mc{A}_n|$, let $\psi$ be the partial computable function defined by
\begin{align*}
\psi(n) \keq f^{\mc{A}_n}(\varphi_0(n), \dots, \varphi_{m-1}(n)),
\end{align*}
and notice that $C \subseteq^* \dom(\psi)$ because $C \subseteq^* \dom(\varphi_i)$ for each $i < m$.  Define $f^{\prod_C \mc{A}_n}$ by
\begin{align*}
f^{\prod_C \mc{A}_n}([\varphi_0], \dots, [\varphi_{m-1}]) = [\psi].
\end{align*}

\medskip

\item Let $c$ be a constant symbol of $\mf{L}$.  Let $\psi$ be the total computable function defined by $\psi(n) = c^{\mc{A}_n}$, and define $c^{\prod_C \mc{A}_n} = [\psi]$.
\end{itemize}

In the case where $\mc{A}_n$ is the same fixed computable structure $\mc{A}$ for every $n$, the cohesive product $\prod_C \mc{A}_n$ is called the \emph{cohesive power of $\mc{A}$ over $C$} and is denoted $\prod_C \mc{A}$.
\end{Definition}

We present versions of {\L}o\'{s}'s theorem for cohesive products and cohesive powers as in~\cite{CohPowJournal}.  The results of~\cite{CohPowJournal} also show that these versions of {\L}o\'{s}'s theorem are best possible.  Analogs of {\L}o\'{s}'s theorem for cohesive powers of arbitrary computable structures originate with Dimitrov's \emph{fundamental theorem of cohesive powers}~\cite{DimitrovCohPow}.  Analogs of {\L}o\'{s}'s theorem for various flavors of effective ultrapowers of the particular structure $\mc{N}$ extend back to work of Hirschfeld~\cite{HirschfeldModels} and of McLaughlin~\cite{McLaughlinSurvey}.  See also~\cite{ShavrukovPrimePowers}.

A formula $\Phi$ is \emph{uniformly decidable} in a uniformly computable sequence of $\mf{L}$-structures $(\mc{A}_n : n \in \Nb)$ if there is an algorithm that determines whether or not $\mc{A}_n \models \Psi(\vec{a})$ given $n$, a subformula $\Psi(\vec{v})$ of $\Phi$, and a sequence of elements $\vec{a}$ of $|\mc{A}_n|$ of the appropriate length.  
The presence of more uniform decidability extends the sorts of formulas that {\L}o\'{s}'s theorem for cohesive products and cohesive powers applies to.
We abuse terminology by saying that a formula is $\Delta_n$ if it is equivalent to both a $\Sigma_n$ formula and a $\Pi_n$ formula.

\begin{Lemma}[\cite{CohPowJournal}*{Lemma~2.5}]\label{lem-LosProdParamHelper}
Let $\mf{L}$ be a computable language, let $(\mc{A}_n : n \in \Nb)$ be a uniformly computable sequence of $\mf{L}$-structures, and let $C$ be a cohesive set.  Let $\Phi(\vec{x}, \vec{y}, v_0, \dots, v_{m-1})$ be a formula that is uniformly decidable in $(\mc{A}_n : n \in \Nb)$.

\begin{enumerate}[(1)]
\item\label{it-LosProdPramSig2Helper} For any $[\varphi_0], \dots, [\varphi_{m-1}] \in |\prod_C \mc{A}_n|$,
\begin{align*}
\prod\nolimits_C \mc{A}_n \models \exists \vec{x}\, \forall \vec{y}\, \Phi(\vec{x}, \vec{y}, [\varphi_0], \dots, [\varphi_{m-1}]) \;\;\Imp\;\; C \subseteq^* \bigl\{n : \mc{A}_n \models \exists \vec{x}\, \forall \vec{y}\, \Phi(\vec{x}, \vec{y}, \varphi_0(n), \dots, \varphi_{m-1}(n))\bigr\}.
\end{align*}

\medskip

\item\label{it-LosProdPramPi2Helper} For any $[\varphi_0], \dots, [\varphi_{m-1}] \in |\prod_C \mc{A}_n|$,
\begin{align*}
C \subseteq^* \bigl\{n : \mc{A}_n \models \forall \vec{x}\, \exists \vec{y}\, \Phi(\vec{x}, \vec{y}, \varphi_0(n), \dots, \varphi_{m-1}(n))\bigr\} \;\;\Imp\;\; \prod\nolimits_C \mc{A}_n \models \forall \vec{x}\, \exists \vec{y}\, \Phi(\vec{x}, \vec{y}, [\varphi_0], \dots, [\varphi_{m-1}]).
\end{align*}
\end{enumerate}
\end{Lemma}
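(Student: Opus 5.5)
We prove both items by reducing the quantifier structure to c.e.\ sets and using cohesiveness. Throughout, write $\vec{\varphi}(n)$ for $\varphi_0(n), \dots, \varphi_{m-1}(n)$, and let $E = \bigcap_{i<m} \dom(\varphi_i)$, a c.e.\ set with $C \subseteq^* E$.

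For item~\ref{it-LosProdPramSig2Helper}, assume $\prod_C \mc{A}_n \models \forall \vec{y}\, \Phi([\vec{\psi}], \vec{y}, [\vec{\varphi}])$ for some witnesses $[\psi_0], \dots, [\psi_{k-1}]$. Suppose toward a contradiction that the conclusion fails. Then the set $\{n : \mc{A}_n \models \neg\exists \vec{x}\,\forall\vec{y}\,\Phi(\vec{x},\vec{y},\vec{\varphi}(n))\}$ meets $C$ infinitely often. On that set we have in particular $\mc{A}_n \models \exists \vec{y}\, \neg\Phi(\vec{\psi}(n), \vec{y}, \vec{\varphi}(n))$, provided the $\psi$'s converge there, which holds for almost all $n \in C$.

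Define a partial computable $\vec{\theta}$ as follows. On input $n$, wait for $\vec{\psi}(n)$ and $\vec{\varphi}(n)$ to converge. Then search, using uniform decidability of $\Phi$ in $\mc{A}_n$ and the uniformly computable domains, for a tuple $\vec{b}$ with $\mc{A}_n \models \neg\Phi(\vec{\psi}(n), \vec{b}, \vec{\varphi}(n))$, and output the first one found. The c.e.\ set $W = \dom(\vec{\theta})$ meets $C$ infinitely often, so by cohesiveness $C \subseteq^* W$. Hence each $\theta_j \in D$, and $C \subseteq^* \{n : \neg\Phi(\vec{\psi}(n), \vec{\theta}(n), \vec{\varphi}(n))\}$. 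This set is c.e.\ relative to the domains, by decidability.

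We now need the atomic-level {\L}o\'{s} step: for a decidable $\Phi$, we have $\prod_C \mc{A}_n \models \Phi([\vec{\alpha}])$ iff $C \subseteq^* \{n : \mc{A}_n \models \Phi(\vec{\alpha}(n))\}$. We prove this by induction on the subformulas of $\Phi$, closing under Boolean connectives via cohesiveness. Cohesiveness applies because the set $\{n \in \dom : \mc{A}_n \models \Psi(\ldots)\}$ and its relative complement are both c.e. Quantified subformulas are handled by the same witness-search trick, which is why uniform decidability is required for all subformulas. This step is the main obstacle: it needs care that every set in play is c.e.\ (not merely $\Delta_2$), so that cohesiveness applies.

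Given the atomic-level step, $\prod_C\mc{A}_n \models \neg\Phi([\vec{\psi}],[\vec{\theta}],[\vec{\varphi}])$, contradicting the assumption. Note that it is enough to argue on the infinite set where the conclusion fails, so we may indeed restrict the search to it.

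Item~\ref{it-LosProdPramPi2Helper} is the contrapositive mirror of item~\ref{it-LosProdPramSig2Helper}. Given $[\vec{\alpha}]$, define $\vec{\theta}(n)$ to be the first $\vec{b}$ with $\mc{A}_n \models \Phi(\vec{\alpha}(n), \vec{b}, \vec{\varphi}(n))$. By hypothesis this search succeeds for almost all $n \in C$ at which the $\alpha$'s converge, so $\vec{\theta} \in D$. The atomic-level step then gives $\prod_C \mc{A}_n \models \Phi([\vec{\alpha}],[\vec{\theta}],[\vec{\varphi}])$, which proves the item.
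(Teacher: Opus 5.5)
Your proof is correct and is the standard argument for this lemma; the paper itself quotes the lemma from \cite{CohPowJournal} without giving a proof. You first establish {\L}o\'{s}'s theorem for uniformly decidable formulas by induction on subformulas, using cohesiveness on the c.e.\ set where a subformula holds and on its c.e.\ relative complement for negation, and witness searches for the quantifier steps. You then use a counterexample search plus cohesiveness for item (1) and a direct witness search for item (2).

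One small blemish: your closing remark about restricting the search to the set where the conclusion fails should go. That set need not be c.e., and you do not need it, because the unrestricted search already converges on infinitely many $n \in C$, which is all cohesiveness requires.
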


\begin{Theorem}[\cite{CohPowJournal}*{Theorem~2.7}]\label{thm-LosProdParam}
Let $\mf{L}$ be a computable language, let $(\mc{A}_n : n \in \Nb)$ be a uniformly computable sequence of $\mf{L}$-structures, and let $C$ be a cohesive set.
\begin{enumerate}[(1)]
\item\label{it-LosProdPramSig2} Let $\Phi(v_0, \dots, v_{m-1})$ be a $\Sigma_2$ formula.  Then for any $[\varphi_0], \dots, [\varphi_{m-1}] \in |\prod_C \mc{A}_n|$,
\begin{align*}
\prod\nolimits_C \mc{A}_n \models \Phi([\varphi_0], \dots, [\varphi_{m-1}]) \;\;\Imp\;\; C \subseteq^* \bigl\{n : \mc{A}_n \models \Phi(\varphi_0(n), \dots, \varphi_{m-1}(n))\bigr\}.
\end{align*}

\medskip

\item\label{it-LosProdPramPi2} Let $\Phi(v_0, \dots, v_{m-1})$ be a $\Pi_2$ formula.  Then for any $[\varphi_0], \dots, [\varphi_{m-1}] \in |\prod_C \mc{A}_n|$,
\begin{align*}
C \subseteq^* \bigl\{n : \mc{A}_n \models \Phi(\varphi_0(n), \dots, \varphi_{m-1}(n))\bigr\} \;\;\Imp\;\; \prod\nolimits_C \mc{A}_n \models \Phi([\varphi_0], \dots, [\varphi_{m-1}]).
\end{align*}

\medskip

\item\label{it-LosProdPramDelta2} Let $\Phi(v_0, \dots, v_{m-1})$ be a $\Delta_2$ formula.  Then for any $[\varphi_0], \dots, [\varphi_{m-1}] \in |\prod_C \mc{A}_n|$,
\begin{align*}
\prod\nolimits_C \mc{A}_n \models \Phi([\varphi_0], \dots, [\varphi_{m-1}]) \;\;\Biimp\;\; C \subseteq^* \bigl\{n : \mc{A}_n \models \Phi(\varphi_0(n), \dots, \varphi_{m-1}(n))\bigr\}.
\end{align*}
\end{enumerate}
\end{Theorem}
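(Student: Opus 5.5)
The plan is to reduce each item to \cref{lem-LosProdParamHelper}. We put $\Phi$ into prenex form. We then collapse blocks of like quantifiers and absorb bounded quantifiers so that the matrix is uniformly decidable in $(\mc{A}_n : n \in \Nb)$. In this setting the $\Sigma_2$ and $\Pi_2$ formulas are understood so that the matrix of the prenex form is quantifier-free, hence uniformly decidable: the atomic formulas are computable uniformly in $n$ because the sequence is uniformly computable, and Boolean combinations preserve this. The same holds for every subformula of the matrix, which is exactly what the definition of uniform decidability requires.

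For item~\ref{it-LosProdPramSig2}, write $\Phi(\vec v) \equiv \exists \vec{x}\, \forall \vec{y}\, \Theta(\vec x, \vec y, \vec v)$ with $\Theta$ quantifier-free. Apply \cref{lem-LosProdParamHelper} item~\ref{it-LosProdPramSig2Helper} with this $\Theta$ to obtain the implication directly. Here the truth of $\Phi$ in each $\mc{A}_n$ and in $\prod_C \mc{A}_n$ is preserved by logical equivalence, since prenexing is a classical equivalence valid in every structure.

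Item~\ref{it-LosProdPramPi2} is symmetric. Write $\Phi \equiv \forall \vec{x}\, \exists \vec{y}\, \Theta$ with $\Theta$ quantifier-free and invoke item~\ref{it-LosProdPramPi2Helper} of the lemma.

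For item~\ref{it-LosProdPramDelta2}, suppose $\Phi$ is equivalent to a $\Sigma_2$ formula $\Phi_\Sigma$ and to a $\Pi_2$ formula $\Phi_\Pi$. This equivalence must hold in all $\mf{L}$-structures, or at least in all $\mc{A}_n$ and in the product. The forward direction is item~\ref{it-LosProdPramSig2} applied to $\Phi_\Sigma$. For the backward direction, if $C \subseteq^* \{n : \mc{A}_n \models \Phi(\varphi_0(n),\dots)\}$, then the same set witnesses the hypothesis for $\Phi_\Pi$, and item~\ref{it-LosProdPramPi2} yields $\prod_C \mc{A}_n \models \Phi_\Pi([\varphi_0],\dots)$, hence $\Phi$.

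The main obstacle is being careful about what "$\Delta_2$" means: the equivalences must transfer into $\prod_C \mc{A}_n$, which is not elementarily equivalent to the factors in general. If the equivalence is only assumed to hold in the $\mc{A}_n$, I would instead argue via cohesiveness. The set $\{n : \mc{A}_n \models \Phi_\Sigma(\dots)\}$ is $\Sigma_2$, so this route does not immediately work. I would therefore adopt the convention that the equivalence is logical, or at least holds across the structures considered, which is the reading under which the stated theorem is meant.
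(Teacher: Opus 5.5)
Your proposal is correct and is essentially the standard derivation of this result, which the paper cites from \cite{CohPowJournal} rather than reproving. The matrix of a $\Sigma_2$ or $\Pi_2$ formula is quantifier-free and hence uniformly decidable in a uniformly computable sequence, so items~(1) and~(2) are direct instances of \cref{lem-LosProdParamHelper}; item~(3) then follows by combining them, with ``equivalent'' read as logical equivalence exactly as you insist (if the equivalence is assumed only in the $\mc{A}_n$, item~(3) can indeed fail), and your remarks about collapsing quantifier blocks and absorbing bounded quantifiers are unnecessary, since the lemma already allows tuples $\vec{x}, \vec{y}$ and a general language has no bounded quantifiers.
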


\begin{Theorem}[\cite{CohPowJournal}*{Theorem~2.9}]\label{thm-LosGeneral}
Let $\mf{L}$ be a computable language, let $\mc{A}$ be a computable $\mf{L}$-structure, and let $C$ be a cohesive set.

\begin{enumerate}[(1)]
\item\label{it-LosDelta3Sent} Let $\Phi$ be a $\Delta_3$ sentence.  Then $\mc{A} \models \Phi$ if and only if $\prod_C \mc{A} \models \Phi$.

\medskip

\item\label{it-LosSigma3Sent} Let $\Phi$ be a $\Sigma_3$ sentence.  If $\mc{A} \models \Phi$, then $\prod_C \mc{A} \models \Phi$.
\end{enumerate}
\end{Theorem}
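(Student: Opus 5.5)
We prove item~\ref{it-LosDelta3Sent} from item~\ref{it-LosSigma3Sent} together with its dual for $\Pi_3$ sentences, and both of these from the $\Sigma_2$/$\Pi_2$ transfer in \cref{thm-LosProdParam} applied to the constant sequence $\mc{A}_n = \mc{A}$.

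\emph{The $\Sigma_3$ case.} Write $\Phi = \exists \vec{x}\, \Psi(\vec{x})$ with $\Psi$ a $\Pi_2$ formula, and suppose $\mc{A} \models \Phi$. Fix a witness $\vec{a} \in |\mc{A}|$ and let $\varphi_i$ be the total computable constant function with value $a_i$. Then $\{n : \mc{A} \models \Psi(\varphi_0(n), \dots)\} = \Nb$, so certainly $C \subseteq^*$ this set. \Cref{thm-LosProdParam} item~\ref{it-LosProdPramPi2} gives $\prod_C \mc{A} \models \Psi([\varphi_0], \dots)$, and hence $\prod_C \mc{A} \models \Phi$. This proves item~\ref{it-LosSigma3Sent}.

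\emph{The $\Pi_3$ case.} Write $\Phi = \forall \vec{x}\, \Theta(\vec{x})$ with $\Theta$ a $\Sigma_2$ formula, and suppose $\prod_C \mc{A} \models \Phi$. We want $\mc{A} \models \Phi$. Given $\vec{a} \in |\mc{A}|$, take the constant functions $\varphi_i$ as before. Since $\prod_C \mc{A} \models \Theta([\varphi_0], \dots)$, \cref{thm-LosProdParam} item~\ref{it-LosProdPramSig2} yields $C \subseteq^* \{n : \mc{A} \models \Theta(\vec{a})\}$. The condition defining this set does not depend on $n$, and $C$ is infinite, so the set is nonempty; hence it is all of $\Nb$, and $\mc{A} \models \Theta(\vec{a})$. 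So if $\mc{A} \models \Phi$ fails, then $\prod_C \mc{A} \models \Phi$ fails.

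\emph{The $\Delta_3$ case.} Suppose $\Phi$ is equivalent to a $\Sigma_3$ sentence $\Phi_\Sigma$ and to a $\Pi_3$ sentence $\Phi_\Pi$. If $\mc{A} \models \Phi$, then $\mc{A} \models \Phi_\Sigma$, so $\prod_C \mc{A} \models \Phi_\Sigma$ by the $\Sigma_3$ case. Conversely, if $\prod_C \mc{A} \models \Phi_\Pi$, then $\mc{A} \models \Phi_\Pi$ by the $\Pi_3$ case.

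\emph{The main obstacle} is that ``equivalent'' must be interpreted so that $\Phi_\Sigma$ and $\Phi_\Pi$ agree in both $\mc{A}$ and $\prod_C \mc{A}$. I would read it as logical equivalence, i.e., in all structures. Then $\prod_C \mc{A} \models \Phi$ iff $\prod_C \mc{A} \models \Phi_\Sigma$ iff $\prod_C \mc{A} \models \Phi_\Pi$, and similarly in $\mc{A}$, so the two directions above combine to give $\mc{A} \models \Phi$ iff $\prod_C \mc{A} \models \Phi$.

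A further subtlety concerns the formula classes. If ``$\Sigma_2$'' in \cref{thm-LosProdParam} refers to the classes $\exists_2$/$\forall_2$ with quantifier-free matrix, this matches the sentence classes here, since every sentence is equivalent to one in prenex form with quantifier-free matrix. Alternatively, with the abuse of terminology in force, I would first put $\Phi$ in prenex form and then work with its $\exists_3$ and $\forall_3$ equivalents.
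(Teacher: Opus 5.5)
Your argument is correct. Constant functions together with \cref{thm-LosProdParam} give upward transfer of $\Sigma_3$ sentences and downward transfer of $\Pi_3$ sentences (the latter uses that a cohesive set is infinite, which is part of the standard definition). Reading ``$\Delta_3$'' as logical equivalence to both a $\Sigma_3$ and a $\Pi_3$ sentence, these combine to give item~(1).

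The paper does not prove this result itself; it quotes it from \cite{CohPowJournal}. Your route is the standard derivation of such a statement from \cref{thm-LosProdParam}. Your $\Pi_3$ step is just the contrapositive of the $\Sigma_3$ step applied to $\neg\Phi$, so only item~(2) of \cref{thm-LosProdParam} is really needed.
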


By \cref{thm-LosGeneral}, a cohesive product of (simple) graphs is again a graph, and a cohesive product of linear orders is again a linear order.  We need \cref{thm-IsoGenSum} below in \Cref{sec-lo}, which is a slight generalization of~\cite{CohPowJournal}*{Theorem~6.3} explaining how cohesive products commute with generalized sums of linear orders.  We include the proof for completeness.

\begin{Definition}[see~\cite{RosBook}*{Definition~1.38}]\label{def-GenSum}
Let $\mc{K}$ be a linear order, and let $(\mc{M}_k : k \in |\mc{K}|)$ be a sequence of linear orders indexed by $|\mc{K}|$.  The \emph{generalized sum} $\sum_{k \in |\mc{K}|}\mc{M}_k$ of $(\mc{M}_k : k \in |\mc{K}|)$ over $\mc{K}$ is the linear order $\mc{S} = (S; \prec_\mc{S})$ defined as follows.  Write $\mc{K} = (K; \prec_\mc{K})$, and write $\mc{M}_k = (M_k; \prec_{\mc{M}_k})$ for each $k \in K$.  Define $S = \{(k, m) : k \in K \land m \in M_k\}$, and define
\begin{align*}
(k_0, m_0) \prec_{\mc{S}} (k_1, m_1) \quad\text{if and only if}\quad (k_0 \prec_\mc{K} k_1) \;\lor\; (k_0 = k_1 \,\land\, m_0 \prec_{\mc{M}_{k_0}} m_1).
\end{align*}
\end{Definition}

Note that if $\mc{K}$ is computable and $(\mc{M}_k : k \in |\mc{K}|)$ is uniformly computable, then $\sum_{k \in |\mc{K}|}\mc{M}_k$ is also computable.  \Cref{thm-IsoGenSum} says that if $(\mc{K}_n : n \in \Nb)$ and $(\mc{M}_{n,k} : n \in \Nb, k \in |\mc{K}_n|)$ are uniformly computable sequences of linear orders and $C$ is cohesive, then
\begin{align*}
\prod\nolimits_C \sum_{k \in |\mc{K}_n|}\mc{M}_{n,k} \quad\iso\quad \sum_{[\theta] \in \left|\prod_C \mc{K}_n\right|}\prod\nolimits_C \mc{M}_{n,\theta(n)}.
\end{align*}
The left-hand side is a cohesive product of generalized sums, where the $n$\textsuperscript{th} sum is of $(\mc{M}_{n,k} : k \in |\mc{K}_n|)$ over $\mc{K}_n$.  The right-hand side is a generalized sum of cohesive products over a linear order that is itself a cohesive product and thus requires further explanation.  The sum is over the linear order $\prod_C \mc{K}_n$.  Term $[\theta]$ in the sum is the cohesive product $\prod_C \mc{M}_{n,\theta(n)}$ of the sequence $\mc{M}_{0,\theta(0)}, \mc{M}_{1,\theta(1)}, \mc{M}_{2,\theta(2)}, \dots$.  Of course, $\theta(n)$ and hence $\mc{M}_{n,\theta(n)}$ may be undefined for some $n$, but $C \subseteq^* \dom(\theta)$ by virtue of $[\theta] \in \left|\prod_C \mc{K}_n\right|$, so $\mc{M}_{n,\theta(n)}$ is defined for almost every $n \in C$.  Thus by $\prod_C \mc{M}_{n,\theta(n)}$, we mean the product constructed from partial computable functions $\varphi$ with $C \subseteq^* \dom(\varphi) \subseteq \dom(\theta)$ and $\forall n\, (\varphi(n)\da \,\imp\, \varphi(n) \in |\mc{M}_{n,\theta(n)}|)$.  See~\cite{CohPowJournal}*{Section~6} for further details.

\begin{Theorem}[Generalizing~\cite{CohPowJournal}*{Theorem~6.3}]\label{thm-IsoGenSum}
Let $(\mc{K}_n : n \in \Nb)$ and $(\mc{M}_{n,k} : n \in \Nb, k \in |\mc{K}_n|)$ be uniformly computable sequences of linear orders.  Let $C$ be a cohesive set.  Then
\begin{align*}
\prod\nolimits_C \sum_{k \in |\mc{K}_n|}\mc{M}_{n,k} \quad\iso\quad \sum_{[\theta] \in \left|\prod_C \mc{K}_n\right|}\prod\nolimits_C \mc{M}_{n,\theta(n)}.
\end{align*}
\end{Theorem}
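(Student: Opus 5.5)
We will write down an explicit map and check directly that it is an order isomorphism. Write $\mc{S}_n = \sum_{k \in |\mc{K}_n|}\mc{M}_{n,k}$, so that $|\mc{S}_n| = \{(k,m) : k \in |\mc{K}_n| \land m \in |\mc{M}_{n,k}|\}$. Identify pairs with their codes via $\la \cdot, \cdot \ra$; then $(\mc{S}_n : n \in \Nb)$ is a uniformly computable sequence of linear orders. An element of the left-hand side is a class $[\varphi]$ with $\varphi(n) \in |\mc{S}_n|$ whenever $\varphi(n)\da$. From $\varphi$ we obtain two partial computable functions $\theta_\varphi = \pi_0 \circ \varphi$ and $\mu_\varphi = \pi_1 \circ \varphi$. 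They satisfy $\dom(\theta_\varphi) = \dom(\mu_\varphi) = \dom(\varphi)$, and $\mu_\varphi(n) \in |\mc{M}_{n,\theta_\varphi(n)}|$. Hence $[\theta_\varphi] \in |\prod_C \mc{K}_n|$ and $[\mu_\varphi] \in |\prod_C \mc{M}_{n,\theta_\varphi(n)}|$. Define $F([\varphi]) = ([\theta_\varphi], [\mu_\varphi])$.

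The first step is to check that $F$ is well defined. The target is a pair whose second coordinate lives in the product indexed by $[\theta]$, and that product must not depend on the representative $\theta$. If $\theta =_C \theta'$, then $\prod_C \mc{M}_{n,\theta(n)}$ and $\prod_C \mc{M}_{n,\theta'(n)}$ should be identified canonically. Given $\mu$ for $\theta$, we restrict it to $\{n : \mu(n)\da \land \theta'(n)\da = \theta(n)\}$, which is c.e. The restriction has the same $=_C$-class, because $C \subseteq^* \dom(\theta)\cap\dom(\theta')$ and $\theta,\theta'$ agree on almost all of $C$. I will fix this convention, following the discussion before the theorem and \cite{CohPowJournal}*{Section~6}, by choosing one representative $\theta$ per class and restricting the domains of the $\mu$'s accordingly. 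With that in place, $\varphi =_C \varphi'$ gives $\theta_\varphi =_C \theta_{\varphi'}$ and $\mu_\varphi =_C \mu_{\varphi'}$ directly.

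Next, surjectivity. Given $[\theta]$ and $[\mu]$ with $C \subseteq^* \dom(\mu) \subseteq \dom(\theta)$, let $\varphi(n) \keq \la \theta(n), \mu(n) \ra$. This is partial computable, has values in $|\mc{S}_n|$, and satisfies $F([\varphi]) = ([\theta],[\mu])$. Injectivity is also immediate: if $\theta_\varphi =_C \theta_{\varphi'}$ and $\mu_\varphi =_C \mu_{\varphi'}$, then $\varphi$ and $\varphi'$ agree on almost all of $C$.

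The main point is order preservation. By \cref{def-CohProd}, $[\varphi] \prec [\varphi']$ holds iff $C \subseteq^* A \cup B$, where
\begin{align*}
A = \{n : \theta_\varphi(n) \prec_{\mc{K}_n} \theta_{\varphi'}(n)\}, \qquad B = \{n : \theta_\varphi(n) = \theta_{\varphi'}(n) \land \mu_\varphi(n) \prec_{\mc{M}_{n,\theta_\varphi(n)}} \mu_{\varphi'}(n)\}.
\end{align*}
Both $A$ and $B$ are c.e. and disjoint, so cohesiveness of $C$ gives $C \subseteq^* A \cup B$ iff $C \subseteq^* A$ or $C \subseteq^* B$. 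Here is why. Suppose $C \subseteq^* A \cup B$ but $C \not\subseteq^* A$. Then $C \subseteq^* \Nb \setminus A$, and hence $C \subseteq^* B$.

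The two cases match the definition of the generalized sum. First, $C \subseteq^* A$ means exactly $[\theta_\varphi] \prec [\theta_{\varphi'}]$ in $\prod_C \mc{K}_n$. Second, $C \subseteq^* B$ means $[\theta_\varphi] = [\theta_{\varphi'}]$ together with $[\mu_\varphi] \prec [\mu_{\varphi'}]$ in the common factor $\prod_C \mc{M}_{n,\theta_\varphi(n)}$. For the second case we use the identification from the well-definedness step, since the order relation there is computed on the set where $\theta_\varphi$ and $\theta_{\varphi'}$ agree. So $F$ preserves and reflects $\prec$, and $F$ is an isomorphism.

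I expect the bookkeeping around the domains of the factors $\prod_C \mc{M}_{n,\theta(n)}$ to be the only delicate part. The factors are indexed by classes rather than by functions, so the identification from the well-definedness step is needed both to define $F$ and to compare $\mu_\varphi$ with $\mu_{\varphi'}$ in the order clause.
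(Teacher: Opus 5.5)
Your proposal is correct and follows the paper's proof essentially step by step: the same map built from $\pi_0\circ\varphi$ and $\pi_1\circ\varphi$, the same surjectivity witness $\varphi(n)\keq\la\theta(n),\mu(n)\ra$, and the same use of cohesiveness to reduce $C\subseteq^* A\cup B$ to one of the two cases of the generalized-sum order. The differences are only in bookkeeping. You make explicit the canonical identification of $\prod\nolimits_C \mc{M}_{n,\theta(n)}$ across different representatives of $[\theta]$, which the paper leaves implicit. You also check injectivity and order-reflection directly, whereas the paper observes that a surjective order-preserving map between linear orders is already an isomorphism.
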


\begin{proof}
To ease notation, let
\begin{align*}
\mc{L}_n &= \sum_{k \in |\mc{K}_n|}\mc{M}_{n,k} & \text{for each $n$,}\\ \\
\mc{X} &= \prod\nolimits_C \mc{K}_n,\\ \\
\mc{Y}_{[\theta]_\mc{X}} &= \prod\nolimits_C \mc{M}_{n, \theta(n)} & \text{for each $[\theta]_\mc{X} \in |\mc{X}|$,}\\ \\
\mc{A} &= \prod\nolimits_C \mc{L}_n,\\ \\
\mc{B} &= \sum_{[\theta]_\mc{X} \in |\mc{X}|} \mc{Y}_{[\theta]_\mc{X}}.
\end{align*}
The goal is to show that $\mc{A} \iso \mc{B}$.  The elements of $\mc{A}$ are of the form $[\varphi]_\mc{A}$ for partial computable $\varphi$ with $\forall n \, (\varphi(n)\da \,\imp\, \varphi(n) \in |\mc{L}_n|)$ and $C \subseteq^* \dom(\varphi)$.  The elements of $\mc{B}$ are of the form $\bigl([\theta]_\mc{X}, [\tau]_{\mc{Y}_{[\theta]_\mc{X}}}\bigr)$ for partial computable $\theta$ and $\tau$ with $\forall n \, (\theta(n)\da \,\imp\, \theta(n) \in |\mc{K}_n|)$, $C \subseteq^* \dom(\tau) \subseteq \dom(\theta)$, and $\forall n \, (\tau(n)\da \,\imp\, \tau(n) \in |\mc{M}_{n, \theta(n)}|)$.

Define a function $F \colon |\mc{A}| \imp |\mc{B}|$ as follows.  For $[\varphi]_\mc{A} \in |\mc{A}|$, we have that $\varphi(n) \in |\mc{L}_n|$ and therefore that $\varphi(n) = \la k, m \ra$ for some $k \in |\mc{K}_n|$ and $m \in | \mc{M}_{n,k} |$ whenever $\varphi(n)\da$.  Let $\theta = \pi_0 \circ \varphi$, and let $\tau = \pi_1 \circ \varphi$.  Then $[\theta]_\mc{X} \in |\mc{X}|$ and $[\tau]_{\mc{Y}_{[\theta]_\mc{X}}} \in |\mc{Y}_{[\theta]_\mc{X}}|$.  Set $F([\varphi]_\mc{A}) = \bigl([\theta]_\mc{X}, [\tau]_{\mc{Y}_{[\theta]_\mc{X}}}\bigr)$.  To see that $F$ is well-defined, observe that if $\varphi =_C \psi$, then also $\pi_0 \circ \varphi =_C \pi_0 \circ \psi$ and $\pi_1 \circ \varphi =_C \pi_1 \circ \psi$.

To show that $F$ is an isomorphism between the linear orders $\mc{A}$ and $\mc{B}$, it suffices to show that $F$ is surjective and order-preserving.

For surjectivity, consider an element $\bigl([\theta]_\mc{X}, [\tau]_{\mc{Y}_{[\theta]_\mc{X}}}\bigr)$ of $\mc{B}$.  Define a partial computable $\varphi$ by $\varphi(n) \keq \la \theta(n), \tau(n) \ra$.  Then $\varphi(n) \in |\mc{L}_n|$ whenever $\varphi(n)\da$, and $C \subseteq^* \dom(\varphi)$ because $C \subseteq^* \dom(\tau) \subseteq \dom(\theta)$.  Therefore $[\varphi]_\mc{A} \in |\mc{A}|$ and $F([\varphi]_\mc{A}) = \bigl([\theta]_\mc{X}, [\tau]_{\mc{Y}_{[\theta]_\mc{X}}}\bigr)$.

For order-preserving, suppose that $[\varphi]_\mc{A}$ and $[\psi]_\mc{A}$ are members of $\mc{A}$ with $[\varphi]_\mc{A} \prec_\mc{A} [\psi]_\mc{A}$.  Then $(\forae n \in C)(\varphi(n) \prec_{\mc{L}_n} \psi(n))$.  Write $\theta = \pi_0 \circ \varphi$, $\tau = \pi_1 \circ \varphi$, $\alpha = \pi_0 \circ \psi$, and $\beta = \pi_1 \circ \psi$.  Then
\begin{align*}
(\forae n \in C)\Bigl( \bigl( \theta(n) \prec_{\mc{K}_n} \alpha(n) \bigr) \;\lor\; \bigl( \theta(n) = \alpha(n) \,\land\, \tau(n) \prec_{\mc{M}_{n, \theta(n)}} \beta(n) \bigr) \Bigr)
\end{align*}
By cohesiveness, either
\begin{itemize}
\item $(\forae n \in C)\bigl( \theta(n) \prec_{\mc{K}_n} \alpha(n) \bigr)$ or

\smallskip

\item $(\forae n \in C)\bigl( \theta(n) = \alpha(n) \,\land\, \tau(n) \prec_{\mc{M}_{n, \theta(n)}} \beta(n) \bigr)$.
\end{itemize}
In the first case, $[\theta]_\mc{X} \prec_\mc{X} [\alpha]_\mc{X}$.  In the second case, $[\theta]_\mc{X} = [\alpha]_\mc{X}$ and $[\tau]_{\mc{Y}_{[\theta]_\mc{X}}} \prec_{\mc{Y}_{[\theta]_\mc{X}}} [\beta]_{\mc{Y}_{[\theta]_\mc{X}}}$.  Thus in either case,
\begin{align*}
F([\varphi]_\mc{A}) = \bigl( [\theta]_\mc{X}, [\tau]_{\mc{Y}_{[\theta]_\mc{X}}} \bigr) \prec_\mc{B} \bigl( [\alpha]_\mc{X}, [\beta]_{\mc{Y}_{[\alpha]_\mc{X}}} \bigr) = F([\psi]_\mc{A}),
\end{align*}
as desired.
\end{proof}

\section{Encoding information into cohesive powers}\label{sec-graph}

We first show that there are computable structures whose cohesive powers are capable of encoding arbitrary information via the cohesive set.

\begin{Proposition}\label{prop-EncodeAll}
There is a fixed computable structure $\mc{A}$ such that for every set $X \subseteq \Nb$, there is a cohesive set $C$ such that every presentation of $\prod_C \mc{A}$ computes $X$.
\end{Proposition}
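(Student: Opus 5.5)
Our plan is to find a computable structure $\mc{A}$ whose cohesive power records, in an isomorphism-invariant way, information about which c.e.\ sets almost contain $C$, and then to pick $C$ so that this recorded information is exactly $X$. The natural choice is $\mc{A} = (\Nb; U_0, U_1, U_2, \dots)$, where the $U_i$ are unary relations with $U_i = \{n : \text{bit } i \text{ of } n \text{ is } 1\}$ (i.e.\ $n \in U_i$ iff $\lfloor n/2^i \rfloor$ is odd). The language is computable and $\mc{A}$ is computable. We consider the constant element $\mathbf{c} = [\mathrm{id}]$ of $\prod_C \mc{A}$. By \cref{def-CohProd}, $\prod_C \mc{A} \models U_i(\mathbf{c})$ if and only if $C \subseteq^* U_i$. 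Since $C$ is cohesive, for each $i$ exactly one of $C \subseteq^* U_i$ and $C \subseteq^* \Nb\setminus U_i$ holds. Hence the function $g_C(i) = 1$ iff $C \subseteq^* U_i$ is well defined, and it is the type of the single element $\mathbf{c}$.

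The difficulty is that a presentation need not tell us which element is $\mathbf{c}$. We therefore want $X$ to be encoded in a way that does not depend on identifying a particular element. We fix this by taking $\mc{A}$ with $\Nb$ as domain and using relations on pairs instead: interpret $U_i(x)$ as ``bit $2i$ of $x$ is $1$'' and $V_i(x)$ as ``bit $2i+1$ of $x$ is $1$''. We will arrange $C$ so that $C \subseteq^* U_i$ iff $i \in X$, and $C \subseteq^* V_i$ iff $i \notin X$. Then for every element $[\varphi]$ of the cohesive power and every $i$, at most one of $U_i$, $V_i$ behaves uniformly; that alone is not yet invariant. A cleaner alternative is to make $X$ definable by existential facts. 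We add the relations $U_i$ and ask whether $\exists x\, U_i(x)$ holds. This fails, because it is witnessed by constant functions regardless of $C$.

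So we switch to a more robust design: $\mc{A} = (\Nb; E)$, where $E$ is an equivalence relation (or a graph) together with unary predicates $R_i$, with $R_i(x)$ iff bit $i$ of $x$ is $1$, and we additionally require that the constant functions be separated from $[\mathrm{id}]$ by a definable property. Concretely, we add a unary predicate $P$ with $P(x)$ iff $x$ is a power of $2$ ... Rather than pursue this, we take a simpler invariant route. We let $\mc{A}$ have a successor-like unary function $s$ and $R_i$ as above, with $s(x) = x+1$. Then elements with distinct $s$-orbits exist, and the orbit of $[\mathrm{id}]$ consists of $[\mathrm{id}+k]$. The key step is to choose $C \subseteq \{\,n : n \equiv \ldots\,\}$ so that the $R$-type is the same along the whole orbit and differs from the types of all elements in the standard part; then $X$ is computable from any presentation by searching for an element $y$ with $s(y) \ne y$ whose type is not that of a standard element. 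Since standard types are $\{0,1\}$-sequences with finitely many $1$'s, we choose $X$ to be infinite (replace $X$ by $X \oplus \Nb$). The $R$-type of $[\mathrm{id}]$ is then infinite, while every element $[\varphi]$ has a $\Delta_0$-decidable type. The main obstacle is guaranteeing that \emph{every} nonstandard element's type computes $X$. We handle this by making the type bits block-coded (bit block $i$ of $x$ repeated so that any $x$ with infinitely many set bits in $C$-limit agrees with $X$), constructing $C$ as a cohesive subset of the $\Pi^0_1(X)$-style set $\{n : n\text{'s bit pattern agrees with } X \text{ up to } \log n\}$ via the standard relativized Friedberg/Mathias argument, and checking through \cref{thm-LosProdParam} that the relevant $\Delta_2$ facts transfer.
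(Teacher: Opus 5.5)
Your final design does not give a proof, and the obstacle that made you abandon your first design is not real. The decoding step in the final design, ``search for an element $y$ with $s(y)\neq y$ whose type is not that of a standard element,'' is not $\mc{P}$-computable. The condition $s(y)\neq y$ holds of every element, since the $\Pi_1$ sentence $\forall x\,(s(x)\neq x)$ transfers by \cref{thm-LosGeneral}. Deciding whether the type $\{i : R_i^{\mc{P}}(y)\}$ is infinite is a $\Pi_2$ question relative to $\mc{P}$, which $\mc{P}$ cannot decide in general. More seriously, the claim that non-standard types encode $X$ is false. The choice of $C$ only constrains elements such as $[\mathrm{id}]$, whose values track $n$ itself. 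For any computable infinite set $Y$, the total computable function $\varphi(n)=\sum_{i\le n}Y(i)2^i$ gives a non-standard element whose $R$-type is exactly $Y$, for every cohesive $C$. So your search can return a computable set instead of $X$. The unspecified block-coding and the ``relativized Friedberg/Mathias'' choice of $C$ cannot repair this, because they act on $C$, not on which types computable functions realize.

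The missing observation is that ``every presentation computes $X$'' is non-uniform. The reduction may depend on $\mc{P}$, and in particular may use a fixed element of $|\mc{P}|$ as a parameter. So your very first design already works once you specify $C$. Let $c_n = 2^n + \sum_{i<n}X(i)2^i$. Then $c_0<c_1<\cdots$, and the lowest $n$ bits of $c_n$ spell $X\rst n$. Let $C$ be any cohesive subset of the infinite set $\{c_n : n\in\Nb\}$. For each $i$ and all $n>i$, we have $c_n\in U_i$ if and only if $i\in X$. Hence $C\subseteq^* U_i$ if and only if $i\in X$, and therefore $\prod_C\mc{A}\models U_i([\mathrm{id}])$ if and only if $i\in X$. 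If $a\in|\mc{P}|$ is the element corresponding to $[\mathrm{id}]$, then $X=\{i : U_i^{\mc{P}}(a)\}$ is $\mc{P}$-computable, with $a$ hard-coded. This is exactly the paper's proof. The only difference is that the paper uses predicates $U_\sigma$ ($\sigma\in 2^{<\Nb}$) splitting $\Nb$ along a binary tree, so that $C\subseteq^* U_\sigma$ if and only if $\sigma\sqsubseteq X$, in place of your bit predicates.
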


\begin{proof}
The language consists of a computable sequence of unary predicates $(U_\sigma : \sigma \in 2^{< \Nb})$, which we think of as names for sets.  Compute a structure $\mc{A} = \bigl(\Nb; (U_\sigma : \sigma \in 2^{< \Nb})\bigr)$ as follows.   Let $U_\emptyset = \Nb$.  Given $U_\sigma$ enumerated in order as $U_\sigma = \{p_0 < p_1 < p_2 < \cdots\}$, let $U_{\sigma^\smf 0} = \{p_{2n} : n \in \Nb\}$ and $U_{\sigma^\smf 1} = \{p_{2n+1} : n \in \Nb\}$.  If $\sigma \sqsubseteq \tau$, then $U_\sigma \supseteq U_\tau$; and if $\sigma$ and $\tau$ are incomparable, then $U_\sigma \cap U_\tau  = \emptyset$.

Let $X \subseteq \Nb$ be any set.  Choose an increasing sequence $c_0 < c_1 < c_2 < \cdots$ with $c_n \in U_{X \rst n}$ for each $n$.  Let $C$ be any cohesive subset of $\{c_n : n \in \Nb\}$.  Then for any $\sigma \in 2^{< \Nb}$, we have that $C \subseteq^* U_\sigma$ if and only if $\sigma \sqsubseteq X$.  Let $f \colon \Nb \to \Nb$ be the computable function $f(n) = n$.  For any $\sigma \in 2^{< \Nb}$,
\begin{align*}
U_\sigma^{\prod_C \mc{A}}([f]) \;\;\Biimp\;\; C \subseteq^* \{n : U_\sigma(f(n))\} \;\;\Biimp\;\; C \subseteq^* \{n : U_\sigma(n)\}
\;\;\Biimp\;\; C \subseteq^* U_\sigma \;\;\Biimp\;\; \sigma \sqsubseteq X.
\end{align*}
Every presentation $\mc{P}$ of $\prod_C \mc{A}$ computes $X$ because if $a \in |\mc{P}|$ is the element corresponding to $[f]$, then the $\mc{P}$-computable set $\{\sigma \in 2^{<\Nb} : U_\sigma^\mc{P}(a)\}$ consists of exactly the initial segments of $X$.
\end{proof}

The use of an infinite language in \cref{prop-EncodeAll} is just for expedience.  With a little more coding, the same effect may be achieved with a finite language.

In light of \cref{prop-EncodeAll}, if cohesive products are to be thought of as effectivized ultraproducts, then some effectivity assumption should be imposed on the cohesive sets over which the products are taken.  The next proposition shows that if we restrict to $\Delta_k$ ($k \geq 2$) cohesive sets, then the cohesive products over these cohesive sets always have $\Delta_{k+1}$ presentations.

\begin{Proposition}\label{prop-Delta3Pres}
Let $\mf{L}$ be a computable language, and let $(\mc{A}_n : n \in \Nb)$ be a uniformly computable sequence of $\mf{L}$-structures.  Let $C$ be a $\Delta_k$ cohesive set for some $k \geq 2$.  Then $\prod_C \mc{A}_n$ has a $\Delta_{k+1}$ presentation.
\end{Proposition}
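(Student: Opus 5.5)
We will build the presentation directly from indices of partial computable functions, and check that every relation we need is decidable from $0^{(k)}$, which is a $\Delta_{k+1}$ oracle. Elements of $\prod_C \mc{A}_n$ are classes $[\varphi_e]$ with $e$ in the index set
\[
D = \{e : \forall n\,(\varphi_e(n)\da \imp \varphi_e(n) \in |\mc{A}_n|) \land C \subseteq^* \dom(\varphi_e)\}.
\]
The plan is to show three things. First, $D$ is computable from $0^{(k)}$. Second, the relation $e =_C e'$ is computable from $0^{(k)}$. Third, for each relation symbol $R$, the relation $R^{\prod_C \mc{A}_n}$ on indices is computable from $0^{(k)}$. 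The function and constant symbols cause no trouble: they are given by indices obtained via the $s$-$m$-$n$ theorem, which is computable uniformly in the input indices.

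Once these facts are in hand, the domain of the presentation will be the set of $e \in D$ that are $=_C$-least in their class, meaning no smaller $e' \in D$ has $e' =_C e$. This set is $0^{(k)}$-computable. Relations are interpreted through the index relations. To interpret a function symbol, compute an index for the composed function and then search, using $0^{(k)}$, for the least element of $D$ that is $=_C$-equivalent to it. This gives a $0^{(k)}$-computable presentation, and hence a $\Delta_{k+1}$ one.

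The main step is the complexity calculation. Every relation above has the form ``$C \subseteq^* W$'' for a set $W$ that is c.e. uniformly in the relevant indices:
\begin{itemize}
\item for $D$, $W = \dom(\varphi_e)$;
\item for $=_C$, $W = \{n : \varphi_e(n)\da = \varphi_{e'}(n)\da\}$;
\item for $R$, $W = \{n : R^{\mc{A}_n}(\varphi_{e_0}(n), \dots)\}$.
\end{itemize}
The first conjunct of $D$, $\forall n\,(\varphi_e(n)\da \imp \varphi_e(n)\in|\mc{A}_n|)$, is $\Pi_1$ and hence harmless.

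Now $C \subseteq^* W$ can be written $\exists m\, \forall n > m\, (n \in C \imp n \in W)$. Since $C$ is $\Delta_k$ and $W$ is $\Sigma_1$, with $k \geq 2$ the matrix is $\Delta_k$ relative to nothing more than $0^{(k-1)}$. So the whole statement is $\Sigma_{k+1}$, which is not yet good enough.

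Cohesiveness supplies the other half. For a c.e. set $W$ we have $C \subseteq^* W$ if and only if not $C \subseteq^* \Nb \setminus W$. The latter is $\exists m\,\forall n>m\,(n\in C \imp n\notin W)$, which is also $\Sigma_{k+1}$. The relation is therefore both $\Sigma_{k+1}$ and $\Pi_{k+1}$, so it is $\Delta_{k+1}$, i.e.\ computable from $0^{(k)}$, uniformly in the indices. The same argument applied to $\Nb\setminus W$ handles the negated relations.

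The one point needing care is uniformity: the two $\Sigma_{k+1}$ searches must be run in parallel with oracle $0^{(k)}$. Exactly one of them succeeds, by cohesiveness, so this decision procedure is total. With that verified, the construction of the presentation above goes through routinely.
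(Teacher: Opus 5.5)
Your proposal is correct and follows essentially the same route as the paper: both note that ``$C \subseteq^* W$'' for uniformly c.e.\ $W$ is $\Sigma_{k+1}$, use cohesiveness to see that its negation ``$C \subseteq^* \Nb \setminus W$'' is also $\Sigma_{k+1}$, and then take the $=_C$-least representatives as the domain. The only cosmetic differences are in the paper's presentation. It indexes elements by pairs $\langle e, N\rangle$ with an explicit threshold, so its domain set is outright $\Pi_k$ without appealing to cohesiveness. It also describes function symbols by their $\Delta_{k+1}$ graphs, rather than by an $s$-$m$-$n$ index followed by a $0^{(k)}$-search for the least representative.
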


\begin{proof}
We describe a $\Delta_{k+1}$ presentation $\mc{P}$ of $\prod_C \mc{A}_n$.  The elements of $\prod_C \mc{A}_n$ are equivalence classes $[\varphi]$ of partial computable functions $\varphi \colon \Nb \to \Nb$ where $\forall n \, (\varphi(n)\da \,\imp\, \varphi(n) \in |\mc{A}_n|)$ and $C \subseteq^* \dom(\varphi)$.
Let
\begin{align*}
D = \{\la e, N \ra : \forall n \, (\varphi_e(n)\da \,\imp\, \varphi_e(n) \in |\mc{A}_n|) \;\land\; (\forall n > N)(n \in C \,\imp\, \varphi_e(n)\da)\}.
\end{align*}
The set $D$ is $\Pi_k$ because $C$ is $\Delta_k$ (and $k \geq 2$).  The elements of $D$ are intended to represent the elements of $\prod_C \mc{A}_n$.  Thus we must identify when different elements of $D$ represent the same element of $\prod_C \mc{A}_n$.
Define
\begin{align*}
\la e, N \ra \sim \la i, M \ra \quad\equiv\quad \la e, N \ra \in D \,\land\, \la i, M \ra \in D \,\land\, (\exists K)(\forall n > K)(n \in C \,\imp\, \varphi_e(n) = \varphi_i(n))
\end{align*}
As written, $\la e, N \ra \sim \la i, M \ra$ is a $\Sigma_{k+1}$ relation because $C$ is $\Delta_k$ and $D$ is $\Pi_k$.  However, if $\la e, N \ra \in D$ and $\la i, M \ra \in D$ but $\neg(\exists K)(\forall n > K)(n \in C \,\imp\, \varphi_e(n) = \varphi_i(n))$, then by cohesiveness $(\exists K)(\forall n > K)(n \in C \,\imp\, \varphi_e(n) \neq \varphi_i(n))$.  Therefore
\begin{align*}
\la e, N \ra \nsim \la i, M \ra \quad\equiv\quad \la e, N \ra \notin D \,\lor\, \la i, M \ra \notin D \,\lor\, (\exists K)(\forall n > K)(n \in C \,\imp\, \varphi_e(n) \neq \varphi_i(n)),
\end{align*}
which is also $\Sigma_{k+1}$.  Thus $\la e, N \ra \sim \la i, M \ra$ is a $\Delta_{k+1}$ relation.

Let $X$ be the set of least representatives:
\begin{align*}
X = \Bigl\{\la e, N \ra : \la e, N \ra \in D \,\land\, \bigl(\forall \la i, M \ra < \la e, N \ra\bigr)\bigl(\la e, N \ra \nsim \la i, M \ra\bigr)\Bigr\}.
\end{align*}
Then $X$ is $\Delta_{k+1}$, and we take it to be the domain of our presentation $\mc{P}$.

For each $m$-ary relation symbol $R$, define
\begin{align*}
R^{\mc P}\bigl(\la e_0, N_0 \ra,& \dots, \la e_{m-1}, N_{m-1} \ra\bigr) \;\equiv\\
&(\forall i < m)\bigl(\la e_i, N_i \ra \in X\bigr) \,\land\, (\exists K)(\forall n > K)\bigl(n \in C \,\imp\, R^{\mc{A}_n}(\varphi_{e_0}(n), \dots, \varphi_{e_{m-1}}(n))\bigr).
\end{align*}
By cohesiveness and the same analysis as for the $\la e, N \ra \sim \la i, M \ra$ relation, $R^{\mc P}$ is a $\Delta_{k+1}$ relation on $X^m$ uniformly in $R$.

For each $m$-ary function symbol $F$, allowing $m=0$ to include the constant symbols, define
\begin{align*}
F^{\mc P}\bigl(&\la e_0, N_0 \ra, \dots, \la e_{m-1}, N_{m-1} \ra\bigr) = \la e_m, N_m \ra \;\equiv\\
&(\forall i \leq m)\bigl(\la e_i, N_i \ra \in X\bigr) \,\land\, (\exists K)(\forall n > K)\bigl(n \in C \,\imp\, \varphi_{e_m}(n) = F^{\mc{A}_n}(\varphi_{e_0}(n), \dots, \varphi_{e_{m-1}}(n))\bigr)
\end{align*}
Again by cohesiveness, $F^{\mc P}$ is a $\Delta_{k+1}$ function $X^m \to X$ uniformly in $F$.

We have defined a $\Delta_{k+1}$ $\mf{L}$-structure $\mc{P}$ with domain $X$.  An element $\la e, N \ra \in X$ corresponds to the element $[\varphi_e]$ of $\prod_C \mc{A}_n$.  Conversely, an element $[\varphi]$ of $\prod_C \mc{A}_n$ corresponds to the element $\la e, N \ra$ of $X$, where $\la e, N \ra$ is the least pair in $D$ such that $(\exists K)(\forall n > K)(n \in C \,\imp\, \varphi_e(n) = \varphi(n))$.  It is straightforward to check that this correspondence is bijective and gives an isomorphism between $\mc{P}$ and $\prod_C \mc{A}_n$, so $\mc{P}$ is a $\Delta_{k+1}$ presentation of $\prod_C \mc{A}_n$.
\end{proof}

Now we compute a graph $\mc{G}$ for which every presentation of every cohesive power computes $0''$.  Thus if we restrict to $\Delta_2$ cohesive sets, the cohesive powers of $\mc{G}$ have maximum complexity.  If $C$ is a $\Delta_2$ cohesive set, then $\prod_C \mc{G}$ has a $0''$-computable presentation, and every presentation of $\prod_C \mc{G}$ computes $0''$.

\begin{Theorem}\label{thm-MaxGraphPow}
There is a computable graph $\mc{G}$ such that for every cohesive set $C$, every presentation of the cohesive power $\prod_C \mc{G}$ computes $0''$.
\end{Theorem}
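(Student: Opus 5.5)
The plan is to code the $\Sigma_2$-complete set $\mathrm{Inf} = \{e : W_e \text{ is infinite}\}$ into $\mc{G}$. The coding should make each of the statements ``$e \in \mathrm{Inf}$'' and ``$e \notin \mathrm{Inf}$'' witnessed in $\prod_C \mc{G}$ by the \emph{existence} of a finite configuration of elements. Then any presentation $\mc{P}$ can decide $\mathrm{Inf}$ by searching simultaneously for both kinds of witnesses, so $\mc{P} \geqT 0''$. The transfer between $\mc{G}$ and the power will use \cref{thm-LosProdParam} and \cref{lem-LosProdParamHelper}, with parameters given by total or partial computable functions into $\mc{G}$. Since graphs have no constants, every special vertex will be tagged by a rigid finite gadget (say a cycle of a prescribed length). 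Tags are existential, so $\mc{P}$ can locate tagged elements by search. Uniqueness of a tag transfers because it is a $\Pi_1$ property of a constant-function parameter.

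Concretely, for each $e$ fix a standard tagged hub $u_e$ and neighbours $x_{e,n}$ ($n \in \Nb$), each carrying a common ``neighbour'' tag. When $W_e$ enumerates an element at a stage $s > n$, we attach fresh vertices forming a triangle marker at $x_{e,n}$. New vertices decide all of their adjacencies when created, so $\mc{G}$ is computable.

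\begin{itemize}
\item \emph{Positive direction.} Suppose $e \in \mathrm{Inf}$. Then the partial computable functions $n \mapsto x_{e,n}$ and $n \mapsto (\text{triangle vertices at } x_{e,n})$ are total, and by \cref{thm-LosProdParam} they yield, in $\prod_C\mc{G}$, a neighbour of $u_e$ carrying a triangle marker.
\item \emph{Negative direction.} Suppose $e \notin \mathrm{Inf}$. Then cofinitely many $x_{e,n}$ lack a marker, so $[n \mapsto x_{e,n}]$ is a neighbour of $u_e$ with no marker.
\end{itemize}

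The main obstacle is that, as stated, these witnesses are not both $\Sigma_1$ over $\mc{P}$. For $e \notin \mathrm{Inf}$, standard neighbours $x_{e,n}$ with small $n$ still carry markers. Also, ``has no marker'' is a $\Pi_1$ property. My first attempt at the fix is to replace ``no marker'' by a \emph{positive} alternative marker, so that the negative direction becomes existential too. Each $x_{e,n}$ will eventually receive exactly one of two markers (triangle or square), and this choice must be made computably. To make it computable, I would let the marked objects be indexed by pairs $(n,t)$ rather than by $n$ alone, with $x_{e,n,t}$ marked ``square'' at creation iff $W_e$ receives no element in $(n,t]$. Then the partial functions $n \mapsto x_{e,n,m(n)}$ with $m$ ranging over partial computable stage-choosers produce, in the power, nonstandard neighbours whose marker is a $\Delta_2$ property of the parameter. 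That marker is therefore decided by \cref{thm-LosProdParam} item~\ref{it-LosProdPramDelta2} and matches membership in $\mathrm{Inf}$ along $C$. I would further arrange that every hub neighbour carrying the ``wrong'' marker is standard and certifiably so. For example, its marker would be joined to a tagged finite cycle whose length bounds $n$, so that $\mc{P}$ can discard it by a finite check. The analysis relies on cohesiveness to guarantee that every such $\Delta_2$ property is decided almost everywhere on $C$.

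Granting this gadget, the algorithm for $\mc{P}$ is as follows. Given $e$, locate $u_e$ by its tag, then search $\mc{P}$ for a neighbour of $u_e$ that carries a triangle marker but no standardness certificate, and in parallel for one that carries a square marker but no certificate. Exactly one of the two searches succeeds, and the one that succeeds tells us whether $e \in \mathrm{Inf}$. The two verification steps use \cref{lem-LosProdParamHelper}: its item~\ref{it-LosProdPramPi2Helper} shows that the correct witness exists, and its item~\ref{it-LosProdPramSig2Helper} shows that no wrong witness exists. Designing the gadget so that both witness types are genuinely existential and the wrong ones are certifiably standard is the step I expect to carry almost all of the difficulty.
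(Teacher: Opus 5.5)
Your proposal has a genuine gap, and it sits exactly in the step you defer: the gadget is never built, and with standard, tag-located hubs it cannot be built.

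Your concrete rule already fails. The selector $n \mapsto x_{e,n,n}$ is square-marked for every $n$, because $(n,n]$ is empty. Even if you insist on $t>n$, an index $e \in \mathrm{Inf}$ that enumerates only at even stages gives the total computable square selector $n \mapsto x_{e,2n,2n+1}$. Either way $[u_e]$ acquires an uncertified square neighbour although $e \in \mathrm{Inf}$, so ``exactly one search succeeds'' is false. Also, ``carries no certificate'' is a $\Pi_1$ condition, so your search is not $\mc{P}$-computable.

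The underlying obstruction is that $u_e$ is the class of a constant function. By item~\ref{it-LosProdPramDelta2} of \cref{thm-LosProdParam}, for quantifier-free $\theta$ we have $\prod_C \mc{G} \models \exists \vec{y}\,\theta(\vec{y},[u_e])$ if and only if $\mc{G} \models \exists \vec{y}\,\theta(\vec{y},u_e)$. Consequences:
\begin{itemize}
\item The set of $e$ on which an existential search anchored at a tag-located standard element succeeds is c.e., read off from the computable graph $\mc{G}$. Two complementary such searches can therefore decide only a computable set.
\item No existential condition over standard parameters can certify that a witness is non-standard. Anything it realizes in the power is already realized by a standard element.
\item Keeping the $\Pi_1$ clause does not help. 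The search is then a $\Sigma_2$ sentence, which transfers to $\mc{G}$ by \cref{thm-LosGeneral}. A correct triangle search would then exhibit $\mathrm{Inf}$ as a $\Sigma_2$ set, which is impossible since $\mathrm{Inf}$ is $\Pi_2$-complete.
\end{itemize}

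The missing idea is to anchor the decoding at a \emph{non-standard} element and supply it non-uniformly. This is legitimate because the theorem only asks that each presentation compute $0''$.

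The paper fixes a $\Sigma_3$ set $\{k : \exists \ell\,\forall i\,\exists j\,R(k,\ell,i,j)\}$ and uses $f(n) = a_n$. At stage $s$ it attaches to $a_n$ a cycle of length $\la k,\ell\ra+3$ on fresh vertices once $(\forall i<n)(\exists j<s)\,R(k,\ell,i,j)$ is seen. The set of $n$ at which $a_n$ gets this cycle is an initial segment of $\Nb$: all of $\Nb$ if $\forall i\,\exists j\,R(k,\ell,i,j)$ holds, and finite otherwise. So by \cref{thm-LosProdParam}, the $\Sigma_1$ property ``$[f]$ lies on a cycle of length $\la k,\ell\ra+3$'' holds exactly when $\ell$ witnesses $k$. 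The coded set is therefore c.e.\ in $\mc{P}$ relative to the single element representing $[f]$. Taking the set $0''\oplus(\Nb\setminus 0'')$ finishes the proof.

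For your set $\mathrm{Inf}$, this amounts to attaching two kinds of cycles at $a_n$. The first type, for $e$, is attached once $|W_e|\geq n$. The second type codes $(e,\ell)$ and is attached exactly when $W_e$ receives nothing at stages in $(\ell,n]$. Then $[f]$ carries the first type iff $e\in\mathrm{Inf}$, and carries the second type for some $\ell$ iff $e\notin\mathrm{Inf}$.
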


\begin{proof}
Given a $\Sigma_3$ set $X$, the plan is to compute a graph $\mc{G}$ such that for every cohesive set $C$, the set $X$ is c.e.\ in every presentation of $\prod_C \mc{G}$.  If we take $X$ to be the particular $\Sigma_3$ set $0'' \oplus (\Nb \setminus 0'')$, then both $0''$ and its complement are c.e.\ and hence computable in every presentation of $\prod_C \mc{G}$ for every cohesive set $C$.

Let $X = \{k : \Phi(k)\}$ be a $\Sigma_3$ set, where $\Phi(k) \equiv \exists \ell\, \forall i\, \exists j\, R(k,\ell,i,j)$ for the computable relation $R$.  We compute a graph $\mc{G}$ on domain $|\mc{G}| = A \cup B$, where $A = \{a_n : n \in \Nb\}$ and $B = \{b_n : n \in \Nb\}$.  For specificity, say $a_n = 2n$ and $b_n = 2n+1$.  The graph $\mc{G}$ consists of cycles of various lengths having exactly one vertex from $A$ and the rest from $B$ (plus some vertices in $B$ may be isolated).  Each vertex in $A$ may be in many cycles, but each vertex in $B$ is in at most one cycle.  This will be done in such a way so that if $f \colon \Nb \to \Nb$ is the computable function $f(n) = a_n$, then, for any cohesive set $C$, $\Phi(k)$ holds if and only if, in $\prod_C \mc{G}$, there is an $\ell$ and a cycle of length $\la k, \ell \ra + 3$ containing $[f]$.  This makes $X$ c.e.\ in every presentation of $\prod_C \mc{G}$ by enumerating the $k$ for which there is a cycle of length $\la k, \ell \ra + 3$ for some $\ell$ that contains the element corresponding to $[f]$.

Compute $\mc{G}$ in stages.  At stage $s$, consider the triples $\la n, k, \ell \ra < s$.  For each such triple, if $(\forall i < n)(\exists j < s)\, R(k,\ell,i,j)$ and $a_n$ does not already belong to a cycle of length $\la k, \ell \ra + 3$, then add a cycle of length $\la k, \ell \ra + 3$ to $\mc{G}$ consisting of $a_n$ and $\la k, \ell \ra + 2$ fresh elements $>\! s$ from $B$.  This completes the computation of $\mc{G}$.

Let $C$ be a cohesive set, and let $f \colon \Nb \to \Nb$ be the computable function $f(n) = a_n$.  Given $k$, we show that $\Phi(k)$ holds if and only if there is an $\ell$ and a cycle of length $\la k, \ell \ra + 3$ in $\prod_C \mc{G}$ containing $[f]$.

Suppose that $\Phi(k)$ holds, and let $\ell$ witness $\forall i\, \exists j\, R(k,\ell,i,j)$.  Then $(\forall i < n)(\exists j < s)\, R(k,\ell,i,j)$ holds for every $n$ and every sufficiently large $s$.  So for every $n$, $\mc{G}$ has a cycle of length $\la k, \ell \ra + 3$ containing $a_n$.  The formula $\Psi_{k,\ell}(v) \equiv \text{``there is a cycle of length $\la k, \ell \ra + 3$ containing $v$''}$ is $\Sigma_1$, and $\mc{G} \models \Psi_{k,\ell}(f(n))$ for every $n$.  Therefore $\prod_C \mc{G} \models \Psi_{k,\ell}([f])$ by \cref{thm-LosProdParam}.  That is, $\prod_C \mc{G}$ has a cycle of length $\la k, \ell \ra + 3$ containing $[f]$.

Suppose instead that $\Phi(k)$ fails, and consider an $\ell$.  This $\ell$ does not witness $\forall i\, \exists j\, R(k,\ell,i,j)$, so there is an $n_\ell$ such that $\forall j\, \neg R(k,\ell,n_\ell,j)$.  Thus $(\forall i < n)(\exists j < s)\, R(k,\ell,i,j)$ fails for every $n > n_\ell$ and every $s$.  Therefore, if $n > n_\ell$, then $\mc{G}$ does not have a cycle of length $\la k, \ell \ra + 3$ containing $a_n$.  The formula $\neg \Psi_{k,\ell}(v)$ is $\Pi_1$, and $\mc{G} \models \neg\Psi_{k,\ell}(f(n))$ for all $n > n_\ell$.  Therefore $\prod_C \mc{G} \models \neg\Psi_{k,\ell}([f])$ by \cref{thm-LosProdParam}, so $\prod_C \mc{G}$ does not have a cycle of length $\la k, \ell \ra + 3$ containing $[f]$.  This analysis was for an arbitrary $\ell$, thus for no $\ell$ does $\prod_C \mc{G}$ have a cycle of length $\la k, \ell \ra + 3$ containing $[f]$.
\end{proof}

\begin{Corollary}\label{cor-degDJ}
There is a computable graph $\mc{G}$ such that every cohesive power $\prod_C \mc{G}$ of $\mc{G}$ by a $\Delta_2$ cohesive set $C$ has degree $0''$.
\end{Corollary}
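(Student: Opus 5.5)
This corollary follows by combining \cref{thm-MaxGraphPow} with \cref{prop-Delta3Pres} in the case $k = 2$. Take $\mc{G}$ to be the computable graph supplied by \cref{thm-MaxGraphPow}, and let $C$ be any $\Delta_2$ cohesive set.

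For the lower bound, \cref{thm-MaxGraphPow} already says that every presentation of $\prod_C \mc{G}$ computes $0''$. This holds for every cohesive set, so in particular for our $\Delta_2$ set $C$. Hence every degree in $\dgsp(\prod_C \mc{G})$ lies above $\mathbf{0}''$.

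For the upper bound, apply \cref{prop-Delta3Pres} to the constant sequence $\mc{A}_n = \mc{G}$, which is trivially uniformly computable, with $k = 2$. This gives a $\Delta_3$ presentation $\mc{P}$ of $\prod_C \mc{G}$. By Post's theorem, a $\Delta_3$ set is computable from $0''$. The encoding of $\mc{P}$ consists of its domain together with the edge relation, each of which is $\Delta_3$, so $\mc{P}$ is $0''$-computable. Therefore $\mathbf{0}'' \in \dgsp(\prod_C \mc{G})$.

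Together, the two bounds show that $\mathbf{0}''$ is the least element of the degree spectrum, that is, the degree of $\prod_C \mc{G}$ is $\mathbf{0}''$.

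There is no real obstacle here. The only points to check are that \cref{thm-MaxGraphPow} is uniform over all cohesive sets, which it is, and that the language of graphs is finite, so that ``$\Delta_3$ presentation'' indeed means a $0''$-computable encoding.
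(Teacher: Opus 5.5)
Your proposal is correct and is the same argument the paper gives. The paper also takes $\mc{G}$ from \cref{thm-MaxGraphPow} for the lower bound, and applies \cref{prop-Delta3Pres} with $k=2$ to get a $0''$-computable presentation for the upper bound.
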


\begin{proof}
Let $\mc{G}$ be the computable graph from \cref{thm-MaxGraphPow}, and let $C$ be a $\Delta_2$ cohesive set.  Then $\prod_C \mc{G}$ has a $\Delta_3$ presentation (i.e., a $0''$-computable presentation) by \cref{prop-Delta3Pres}, and every presentation of $\prod_C \mc{G}$ computes $0''$ by \cref{thm-MaxGraphPow}.
\end{proof}

Note that this construction is not possible with a linear order by~\cref{richter}.
The next section is dedicated to seeing what is possible with a linear order.

\section{A linear order exhibiting a Tennenbaum-like phenomenon}\label{sec-lo}

We compute a linear order $\mc{L}$ with the following property.  If $C$ is a cohesive set and $\mc{P}$ is a presentation of $\prod_C \mc{L}$, then $\mc{P}''$ has $\pa$-degree relative to $0''$.  Such a $\mc{P}$ cannot be computable because $0''$ does not have $\pa$-degree relative to itself.  Therefore $\mc{L}$ exhibits a Tennenbaum-like phenomenon, as it is a computable linear order whose cohesive powers do not have computable presentations.  This answers our Question~\ref{q-lo} from the introduction.

To compute $\mc{L}$, we first compute a sequence $(\mc{L}_n : n \in \Nb)$ of linear orders where $\prod_C \mc{L}_n$ has the desired property for every cohesive set $C$:  the double-jump of every presentation of $\prod_C \mc{L}_n$ has $\pa$-degree relative to $0''$.  Then we show that we can essentially take the desired $\mc{L}$ to be the sum of the linear orders $(\mc{L}_n : n \in \Nb)$.

\begin{Theorem}\label{thm-ProdNonRec}
There is a uniformly computable sequence $(\mc{L}_n : n \in \Nb)$ of linear orders such that for every cohesive set $C$ and every presentation $\mc{P}$ of $\prod_C \mc{L}_n$, the double-jump $\mc{P}''$ has $\pa$-degree relative to $0''$.  Thus $\prod_C \mc{L}_n$ has no computable presentation.
\end{Theorem}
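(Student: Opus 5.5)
The plan is to reduce the statement to computing a separator for a pair of disjoint $\Sigma_3$ sets. Let $A = \{e : \varphi_e^{0''}(e) = 0\}$ and $B = \{e : \varphi_e^{0''}(e) = 1\}$, and write
\begin{align*}
e \in A \;\Biimp\; \exists \ell\, \forall i\, \exists j\, R_A(e,\ell,i,j), \qquad e \in B \;\Biimp\; \exists \ell\, \forall i\, \exists j\, R_B(e,\ell,i,j)
\end{align*}
with $R_A, R_B$ computable. By the discussion of $\pa$ degrees in \cref{sec-bak}, it suffices to build $(\mc{L}_n : n \in \Nb)$ so that $\mc{P}''$ computes an $AB$-separator for every cohesive $C$ and every presentation $\mc{P}$ of $\prod_C \mc{L}_n$. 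The final sentence then follows, since $0''$ does not have $\pa$-degree relative to itself.

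I would borrow the mechanism of \cref{thm-MaxGraphPow}. At coordinate $n$, the condition $(\forall i<n)(\exists j)\,R(e,\ell,i,j)$ is $\Sigma_1$, so a computable construction of $\mc{L}_n$ can react to it by inserting finite gadgets (blocks of prescribed finite sizes) as the witnesses $j$ appear. By \cref{thm-LosProdParam} applied to $\Sigma_1$/$\Pi_1$ formulas, such a gadget then exists in the product near $[\theta]$ exactly when the $\Pi_2$ statement $\forall i\,\exists j\,R(e,\ell,i,j)$ is true.

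Concretely, I would set $\mc{L}_n = \sum_{\sigma \in \mc{K}_n} \mc{M}_{n,\sigma}$, where $\mc{K}_n$ is a computable order on the binary strings $\sigma$ of length $n$ (say lexicographic, padded with dense filler so the product of the $\mc{K}_n$ is well behaved). The block $\mc{M}_{n,\sigma}$ is built as follows.
\begin{enumerate}[(a)]
\item It codes $\sigma$ itself by the sizes of a run of finite blocks, e.g.\ a block of size $2e+3+\sigma(e)$ for each $e<n$, separated by copies of $\eta$-like filler.
\item It receives an \emph{error gadget} of size $\la e,\ell\ra$-dependent length whenever $\sigma(e)=1$ and $(\forall i<n)(\exists j)\,R_A(e,\ell,i,j)$ has been seen.
\item It receives a symmetric error gadget when $\sigma(e)=0$ and the corresponding condition for $R_B$ has been seen.
\end{enumerate}

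By \cref{thm-IsoGenSum}, $\prod_C \mc{L}_n \iso \sum_{[\theta]} \prod_C \mc{M}_{n,\theta(n)}$. Each summand is indexed by a coherent choice of strings $\theta(n)$, which converges along $C$ (by cohesiveness, coordinatewise in $e$) to a real $Z_\theta \in 2^\Nb$. The summand for $\theta$ displays the bits of $Z_\theta$ as finite block sizes. It contains an error gadget for $(e,\ell)$ exactly when $Z_\theta(e)=1$ and $\ell$ witnesses $e \in A$, or when the symmetric condition holds for $B$. The choice $\theta(n) = $ ``the $n$-th $\Sigma_1$-approximation to a separator'' is not computable, so I would instead let every string appear at every $n$. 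Then summands whose real is a genuine separator always exist, e.g.\ for computable $Z$ taken as the constant-choice $\theta$. I would also need to check that the reals $Z_\theta$ occurring are exactly the computable ones, or at least that the error-free ones are separators.

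The decoding from $\mc{P}''$ goes as follows. In $\mc{P}$, the statements ``$[x,y]$ is finite'' and ``$x$ lies in a block of size $m$'' are $\Sigma_2$/$\Delta_3$-level. Hence $\mc{P}''$ can read off, for a given element $x$, the bits $Z_\theta(e)$ of the summand containing $x$, and can test for the presence of a \emph{specific} error gadget $(e,\ell)$. The absence of \emph{all} errors for bit $e$ is a $\Pi$ condition over $\ell$, i.e.\ $\Pi_1$ relative to $\mc{P}''$. So I would not search for a globally error-free summand. Instead, $\mc{P}''$ builds the tree of finite strings $\tau$ for which no summand error has been detected among $\ell<|\tau|$, organised so that each path computes a separator. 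Using that $\mc{P}''$ can decide the $\Pi_1^{\mc{P}''}$ ``no error'' facts, or directly exploiting that $\prod_C \mc{L}_n$ realises every string prefix, $\mc{P}''$ then computes a path. Since $A$, $B$ are c.e.\ in $0'' \leq_T \mc{P}''$, the alternative is to combine these reads into a $\dnc_2$ function relative to $0''$ by extending a prefix bit by bit: at bit $e$, wait until either $e$ enters $A$ or $B$ (enumerable in $\mc{P}''$), or a summand with that prefix extending either way shows no error. The latter test is the delicate part.

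The main obstacle is exactly this last step. The ``no error'' condition is one quantifier too high for $\mc{P}''$ to decide outright, and linear orders cannot name the good element as $[f]$ named it in the graph case (cf.\ \cref{richter}). My first attempt would be to make the error gadgets destroy the summand's coding region in a $\Sigma_2$-detectable way, e.g.\ by turning a finite block infinite, so that ``bit $e$ of this summand is still valid'' becomes $\Delta_3$ in $\mc{P}$. The bit-by-bit extension then becomes $\mc{P}''$-effective.
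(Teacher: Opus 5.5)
There is a genuine gap. It sits where you put the ``main obstacle'', and it is not a matter of removing one quantifier: it is the whole content of the theorem, and the surrounding architecture does not help with it.

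Separation is a bitwise condition, so no tree and no search among reals is needed. What is needed is, for each $e$, a property of $\prod_C \mc{L}_n$ that $\mc{P}''$ can \emph{decide} (so $\Delta_3$ in $\mc{P}$), forced one way when $e \in A$ and the other way when $e \in B$. Your error gadgets only let $\mc{P}''$ enumerate $\Sigma_3$ evidence that $e \in A$ or $e \in B$. Since $A$ and $B$ are c.e.\ in $0''$, this is information $0''$ already has.

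Several steps built on this do not hold:
\begin{enumerate}[(i)]
\item The step ``$\mc{P}''$ can decide the $\Pi_1^{\mc{P}''}$ no-error facts'' is unjustified.
\item Computing a path through the tree of unrefuted prefixes, whose paths are the separators (a $\Pi^0_1$ class relative to $0''$), is exactly the conclusion you want to prove.
\item No computable $Z$ separates this $A$ and $B$, since separators are $\dnc_2$ relative to $0''$.
\item Every summand's real satisfies $Z_\theta \leq_{\mathrm{T}} C'$. Bit $e$ is determined by which of the disjoint c.e.\ sets $\{n : \theta(n)(e)=0\}$, $\{n : \theta(n)(e)=1\}$ meets $C$ only finitely often, and that is a $\Sigma_2^C$ condition. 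So for a $\Delta_2$ cohesive $C$, no summand codes a separator at all.
\end{enumerate}

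Your proposed repair, letting an error turn a finite block infinite, does not by itself turn the $\Sigma_3$ fact into a $\Sigma_2$ one. Suppose coordinate $n$ reacts to every $\ell < n$ for which $(\forall i<n)(\exists j)\,R_A(e,\ell,i,j)$ has appeared. Then false witnesses $\ell$, whose failure shows up only beyond $n$, can disturb coordinate $n$ for every $n$. So the product can look erroneous even when $e \notin A$, and nothing drives the opposite outcome when $e \in B$.

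The missing idea is a two-sided competition with priority, which is how the paper proceeds:
\begin{enumerate}[(i)]
\item Write $k \in A$ iff $A_k$ has an infinite column, and $k \in B$ iff $B_k$ has one.
\item Each $\mc{M}_{n,6k+5}$ starts as $\omega$. At each stage, the least column $i$ with a new element plays in the least coordinate $n > i$ where no column $j \leq i$ (of either set) has played yet.
\item A column of $A_k$ puts a copy of $\bm{k+1}$ on top; a column of $B_k$ puts a copy of $\omega$ on top.
\end{enumerate}
The least infinite column then owns the top of $\mc{M}_{n,6k+5}$ for cofinitely many $n$. By \cref{thm-LosProdParam}, using a partial computable function that picks the top element in the $A$ case, $\mc{R}_k = \prod_{n\in C}\mc{M}_{n,6k+5}$ has a maximum if $k \in A$ and none if $k \in B$. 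Having a maximum is a $\Sigma_2$ property of an interval, which $\mc{P}''$ decides.

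The fact that a linear order cannot name an element like $[f]$ is handled by delimiters rather than names. $\mc{R}_k$ sits between $\mc{S}_k \iso (\bm{k+2})+\eta+(\bm{k+2})+\eta+(\bm{k+2})$ and $\mc{S}_{k+1}$. $\mc{P}''$ locates these as the unique intervals of their types, once one checks two things: non-maximal elements of $\mc{R}_k$ have successors, and the non-standard part $\mc{J}$ has no finite blocks of size $\geq 2$. Then $\mc{P}''$ computes the single separator $\{k : \mc{R}_k \text{ has a maximum}\}$.
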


\begin{proof}
Let $A$ and $B$ be two disjoint $\Sigma_3$ sets.  We compute $(\mc{L}_n : n \in \Nb)$ so that whenever $C$ is a cohesive set and $\mc{P}$ is a presentation of $\prod_C \mc{L}_n$, there is an $AB$-separator computable from $\mc{P}''$.  The desired result follows because there are disjoint $\Sigma_3$ sets $A$ and $B$ whose separators are exactly the functions that are $\dnc_2$ relative to $0''$.

Given any $\Sigma_3$ set $A$, we can compute a sequence of sets $(A_k)_{k \in \Nb}$ such that, for all $k$, $k \in A$ if and only if $A_k$ has an infinite column: $(\exists i)(\forall s_0)(\exists s > s_0)(A_k(\la i, s \ra) = 1)$. 
For our disjoint $\Sigma_3$ sets $A$ and $B$, we therefore have two uniformly computable sequences of sets $(A_k)_{k \in \Nb}$ and $(B_k)_{k \in \Nb}$ where, for every $k$, at most one of $A_k$ and $B_k$ has an infinite column.

We uniformly compute linear orders $(\mc{L}_n : n \in \Nb)$ so that for every cohesive set $C$, the cohesive product $\prod_C \mc{L}_n$ has the form
\begin{align*}
\prod\nolimits_C \mc{L}_n \quad=\quad \sum_{k \in \Nb} (\mc{S}_k + \mc{R}_k) + \mc{J} \quad=\quad (\mc{S}_0 + \mc{R}_0 + \mc{S}_1 + \mc{R}_1 + \cdots) + \mc{J}
\end{align*}
with the following properties.

\begin{itemize}
\item For each $k$, $\mc{S}_k \;\iso\; (\bm{k+2}) + \eta + (\bm{k+2}) + \eta + (\bm{k+2})$.  The orders $\mc{S}_k$ and $\mc{S}_{k+1}$ serve as delimiters around the order $\mc{R}_k$.

\smallskip

\item For each $k$, $\mc{R}_k$ has no non-trivial dense interval.  In fact, every element of $\mc{R}_k$ other than the maximum element (should it exist) has a successor.

\smallskip

\item $\mc{J}$ does not have finite blocks of size $2$ or more.
\end{itemize}
It follows that for each $k$, $\mc{S}_k$ is the unique interval of $\prod_C \mc{L}_n$ with order-type $(\bm{k+2}) + \eta + (\bm{k+2}) + \eta + (\bm{k+2})$.  Furthermore, if $\mc{P}$ is a presentation of $\prod_C \mc{L}_n$, then $\mc{P}''$ can identify $\mc{S}_k$ by searching for elements $u_0, \dots, u_{k+1}, v_0, \dots, v_{k+1}, w_0, \dots, w_{k+1}$ such that the following two-quantifier properties hold:
\begin{itemize}
\item $u_0 \prec_{\mc P} \dots \prec_{\mc P} u_{k+1} \prec_{\mc P} v_0 \prec_{\mc P} \dots \prec_{\mc P} v_{k+1} \prec_{\mc P} w_0 \prec_{\mc P} \dots \prec_{\mc P} w_{k+1}$,
\item for each $i \leq k$, the intervals $(u_i, u_{i+1})_{\mc P}$, $(v_i, v_{i+1})_{\mc P}$, and $(w_i, w_{i+1})_{\mc P}$ are empty, and
\item the intervals $[u_{k+1}, v_0]_{\mc P}$ and $[v_{k+1}, w_0]_{\mc P}$ are dense.
\end{itemize}
The encoding plan is to arrange for $\mc{R}_k$ to have a maximum element if $A_k$ has an infinite column (i.e., if $k \in A$) and for $\mc{R}_k$ to have no maximum element if $B_k$ has an infinite column (i.e., if $k \in B$).  As we are only trying to compute a separator for $A$ and $B$, we will not need any control over whether $\mc{R}_k$ will have a maximum element in the case that neither $A_k$ nor $B_k$ has an infinite column.  Again, let $\mc{P}$ be a presentation of $\prod_C \mc{L}_n$.  To decode, use $\mc{P}''$ to compute the set $X = \{k : \text{$\mc{R}_k$ has a maximum element}\}$, which is an $AB$-separator.  To do this, given $k$, use $\mc{P}''$ to identify the intervals $\mc{S}_k$ and $\mc{S}_{k+1}$ as explained above.  Specifically, identify the right endpoint $w$ of $\mc{S}_k$ and the left endpoint $u$ of $S_{k+1}$ so that $\mc{R}_k$ is the interval $(w,u)$ of $\mc{P}$.  Then we use $\mc{P}''$ again to determine whether $\mc{R}_k$ has a maximum element and thus whether $k \in X$.  The idea of using orders like the $\mc{S}_k$'s to delimit diagonalization or, in this case, encoding regions like the $\mc{R}_k$'s comes from~\cite{JockuschSoareLO}.  The `$\mc{J}$' at the end of $\prod_C \mc{L}_n$ stands for `junk.'  We do not use this region for encoding or decoding, but we ensure that it does not contain finite blocks of size $\geq\! 2$ to ensure that $\prod_C \mc{L}_n$ does not contain unintended intervals of the form $\mc{S}_k$.

To implement the encoding, we uniformly compute linear orders $(\mc{M}_{n,k} : n, k \in \Nb)$ and take $\mc{L}_n = \sum_{k \in \Nb}\mc{M}_{n,k}$ for each $n$.  In fact, we compute $(\mc{M}_{n,k} : n, k \in \Nb)$ in such a way that the successor relation on $\mc{M}_{n,k}$ is c.e.\ and hence computable uniformly in $n$ and $k$.  Throughout we use `$\Nb$' to also denote the usual presentation of $(\Nb; <)$.  

For any cohesive set $C$, we have that
\begin{align*}
\prod\nolimits_C \mc{L}_n \quad=\quad \prod\nolimits_{C} \sum_{k \in \Nb}\mc{M}_{n,k} \quad\iso\quad \sum_{[\theta] \in \left|\prod_C \Nb \right|}\prod\nolimits_C \mc{M}_{n,\theta(n)}
\end{align*}
by \cref{thm-IsoGenSum}.  We also have that $\prod_C \Nb \iso \omega + \zeta\cdot\eta$ by~\cite{CohPowJournal}*{Theorem~4.5}.
Using the terminology of~\cite{CohPowJournal}, we call the initial $\omega$ of $\prod_C \Nb$ the \emph{standard part} and denote it $\left|\prod_C \Nb\right|_\std$, and we call the remaining $\zeta\cdot\eta$ within $\prod_C \Nb$ the \emph{non-standard part} and denote it $\left|\prod_C \Nb\right|_\nonstd$.  An element $[\theta]$ of $\prod_C \Nb$ is standard if and only if it is eventually constant on $C$, and $[\theta]$ is non-standard if and only if $\lim_{n \in C} \theta(n) = \infty$ (\cite{CohPowJournal}*{Lemma~4.2}).  In particular, the elements of $\left|\prod_C \Nb\right|_\std$ are exactly those represented by constant functions.  Therefore
\begin{align*}
\sum_{[\theta] \in \left|\prod_C \Nb \right|}\prod\nolimits_C \mc{M}_{n,\theta(n)} \quad&=\quad \sum_{[\theta] \in \left|\prod_C \Nb \right|_\std}\prod\nolimits_C \mc{M}_{n,\theta(n)} \;+\; \sum_{[\theta] \in \left|\prod_C \Nb \right|_\nonstd}\prod\nolimits_C \mc{M}_{n,\theta(n)}\\ \\
&= \sum_{k \in \Nb}\prod_{n \in C} \mc{M}_{n,k} \;+\; \sum_{[\theta] \in \left|\prod_C \Nb \right|_\nonstd}\prod\nolimits_C \mc{M}_{n,\theta(n)}.
\end{align*}
In the last expression, we use the notation $\prod_{n \in C} \mc{M}_{n,k}$ to emphasize that $k$ is fixed and that the cohesive product is taken over the sequence $\mc{M}_{0,k}, \mc{M}_{1,k}, \dots$.  Put
\begin{align*}
\mc{S}_k \;&=\; \prod_{n \in C} \mc{M}_{n, 6k} + \prod_{n \in C} \mc{M}_{n, 6k+1} + \prod_{n \in C} \mc{M}_{n, 6k+2} + \prod_{n \in C} \mc{M}_{n, 6k+3} + \prod_{n \in C} \mc{M}_{n, 6k+4} & \text{for each $k$,}\\ \\
\mc{R}_k \;&=\; \prod_{n \in C} \mc{M}_{n, 6k+5} & \text{for each $k$,}\\ \\
\mc{J} \;&=\; \sum_{[\theta] \in \left|\prod_C \Nb \right|_\nonstd}\prod\nolimits_C \mc{M}_{n,\theta(n)}
\end{align*}
to write $\prod_C \mc{L}_n$ in the desired form
\begin{align*}
\prod\nolimits_C \mc{L}_n \quad=\quad (\mc{S}_0 + \mc{R}_0 + \mc{S}_1 + \mc{R}_1 + \cdots) + \mc{J}.
\end{align*}

We now describe the $\mc{M}_{n,k}$.
The orders that are used to build up the $\mc{S}_k$ are straightforward to describe and directly reflect the desired structure of $\mc{S}_k$.
For each $n$ and $k$, set
\begin{gather*}
\mc{M}_{n, 6k} \;=\; \mc{M}_{n, 6k+2} \;=\; \mc{M}_{n, 6k+4} \;=\; k+2\\
\mc{M}_{n, 6k+1}  \;=\; \mc{M}_{n, 6k+3} \;=\; \Qb,
\end{gather*}
where $k+2$ is the usual presentation of the linear order $\bm{k+2}$ and $\Qb$ is the usual presentation of $(\Qb; <)$.  Note that the successor relation is uniformly computable for these orders.  Then
\begin{align*}
\prod_{n \in C}\mc{M}_{n, 6k} \;=\; \prod_{n \in C}\mc{M}_{n, 6k+2} \;=\; \prod_{n \in C}\mc{M}_{n, 6k+4} \;\iso\; \bm{k+2}\\
\end{align*}
for each $n$ and $k$ because $k+2$ is finite (see~\cite{CohPowJournal}*{Section~2})  and
\begin{align*}
\prod_{n \in C}\mc{M}_{n, 6k+1}  \;=\; \prod_{n \in C}\mc{M}_{n, 6k+3} \;\iso\; \eta
\end{align*}
for each $n$ and $k$ by~\cite{CohPowJournal}*{Proposition~3.4}.  Therefore we indeed have $\mc{S}_k \;\iso\; (\bm{k+2}) + \eta + (\bm{k+2}) + \eta + (\bm{k+2})$ for each $k$.

We now describe how to compute $\mc{M}_{n, 6k+5}$ for all $n$ and $k$.  Each $\mc{M}_{n, 6k+5}$ will either have order-type $\omega\cdot\bm{\ell}$ for some $\ell > 0$ or order-type $\omega\cdot\bm{\ell} + \bm{q}$ for some $\ell > 0$ and $q > k$.  Moreover, the successor relation on $\mc{M}_{n, 6k+5}$ will be c.e.\ uniformly in $n$ and $k$.

Think of $k$ as being fixed and of computing the sequence $(\mc{M}_{n, 6k+5} : n \in \Nb)$ of linear orders.  The goal is to ensure that $\mc{R}_k = \prod_{n \in C} \mc{M}_{n, 6k+5}$ has a maximum element if $A_k$ has an infinite column and that $\mc{R}_k$ does not have a maximum element if $B_k$ has an infinite column.  To do this, the columns of $A_k$ play finite sequences of length $k+1$ at the top of $\mc{M}_{n, 6k+5}$ for larger and larger $n$, and the columns of $B_k$ play $\omega$-sequences at the top of $\mc{M}_{n, 6k+5}$ for larger and larger $n$.  We impose priority on the plays, meaning that a lower priority column is not allowed to play in $\mc{M}_{n, 6k+5}$ if a higher priority column has already played there.  In detail, the computation proceeds as follows.

At stage $0$, $\mc{M}_{n, 6k+5}$ starts as an $\omega$-sequence for every $n$.  Declare $\la 0, x \ra \in |\mc{M}_{n, 6k+5}|$ for every $x$, with $\la 0, x \ra \prec_{\mc{M}_{n, 6k+5}} \la 0, y \ra$ if and only if $x < y$.  Furthermore, enumerate $\la\la 0, x \ra, \la 0, x+1 \ra\ra$ as a successor pair in $\mc{M}_{n, 6k+5}$ for every $x$.

At stage $s > 0$, let $i < s$ be least such that either $A_k(\la i, s \ra) = 1$ or $B_k(\la i, s \ra) = 1$.  If there is no such $i$, then declare $\la s, x \ra \notin |\mc{M}_{n, 6k+5}|$ for all $x$ and all $n$.  Go on to stage $s+1$.

Suppose there is such an $i$.  If $A_k(\la i, s \ra) = 1$, then we have noticed that column $i$ of $A_k$ has a new element $s$.  In this case, $A_k$ plays a copy of $\bm{k+1}$ at the top of $\mc{M}_{n, 6k+5}$ for the least $n > i$ such that neither $A_k$ nor $B_k$ has yet played in $\mc{M}_{n, 6k+5}$ on account of a column $j \leq i$.  Specifically, declare $\la s, x \ra \in |\mc{M}_{n, 6k+5}|$ for every $x \leq k$, with $\la s, x \ra \prec_{\mc{M}_{n, 6k+5}} \la s, y \ra$ if and only if $x < y$.  Declare $\la s, x \ra \notin |\mc{M}_{n, 6k+5}|$ for every $x > k$.  Declare $p \prec_{\mc{M}_{n, 6k+5}} \la s, x \ra$ for all $x \leq k$ and all $p$ already in $|\mc{M}_{n, 6k+5}|$ at the start of stage $s$.  Enumerate $\la\la s, x \ra, \la s, x+1 \ra\ra$ as a successor pair in $\mc{M}_{n, 6k+5}$ for every $x < k$.  Finally, if $\mc{M}_{n, 6k+5}$ had a maximum element $p$ at the start of stage $s$ (which is if $A_k$ was last to play in $\mc{M}_{n, 6k+5}$), then also enumerate $\la p, \la s, 0 \ra\ra$ as a successor pair in $\mc{M}_{n, 6k+5}$.  Declare $\la s, x \ra \notin |\mc{M}_{m, 6k+5}|$ for all $x$ and all $m \neq n$.  We say that \emph{$A_k$ has played in $\mc{M}_{n, 6k+5}$ on account of column $i$} or that \emph{column $i$ of $A_k$ played in $\mc{M}_{n, 6k+5}$}.  Go on to stage $s+1$.

If $A_k(\la i, s \ra) = 0$ but $B_k(\la i, s \ra) = 1$, then we have noticed that column $i$ of $B_k$ has a new element $s$.  In this case, $B_k$ plays a copy of $\omega$ at the top of $\mc{M}_{n, 6k+5}$ for the least $n > i$ such that neither $A_k$ nor $B_k$ has yet played in $\mc{M}_{n, 6k+5}$ on account of a column $j \leq i$.  Specifically, declare $\la s, x \ra \in |\mc{M}_{n, 6k+5}|$ for every $x$, with $\la s, x \ra \prec_{\mc{M}_{n, 6k+5}} \la s, y \ra$ if and only if $x < y$.  Declare also $p \prec_{\mc{M}_{n, 6k+5}} \la s, x \ra$ for all $x$ and all $p$ already in $|\mc{M}_{n, 6k+5}|$ at the start of stage $s$.  Enumerate $\la\la s, x \ra, \la s, x+1 \ra\ra$ as a successor pair in $\mc{M}_{n, 6k+5}$ for every $x$.  Finally, if $\mc{M}_{n, 6k+5}$ had a maximum element $p$ at the start of stage $s$ (which is if $A_k$ was last to play in $\mc{M}_{n, 6k+5}$), then also enumerate $\la p, \la s, 0 \ra\ra$ as a successor pair in $\mc{M}_{n, 6k+5}$.  Declare $\la s, x \ra \notin |\mc{M}_{m, 6k+5}|$ for all $x$ and all $m \neq n$.  We say that \emph{$B_k$ has played in $\mc{M}_{n, 6k+5}$ on account of column $i$} or that \emph{column $i$ of $B_k$ played in $\mc{M}_{n, 6k+5}$}.  Go on to stage $s+1$.

For a given $n$, $\mc{M}_{n, 6k+5}$ starts as an order of type $\omega$, and then only the columns $i < n$ of $A_k$ and $B_k$ can play further elements into $\mc{M}_{n, 6k+5}$.  Moreover, for each $i < n$, once either of $A_k$ or $B_k$ plays in $\mc{M}_{n, 6k+5}$ on account of column $i$, neither does so on account of column $i$ again.  Thus $\mc{M}_{n, 6k+5}$ is a finite sum of orders of type $\omega$ and orders of type $\bm{k+1}$, starting with an order of type $\omega$.  In particular, $\mc{M}_{n, 6k+5}$ either has order-type $\omega\cdot\bm{\ell}$ for some $\ell > 0$ or order-type $\omega\cdot\bm{\ell} + \bm{q}$ for some $\ell > 0$ and $q > k$.

We now show that $\prod_{n \in C} \mc{M}_{n, 6k+5}$ has a maximum element if $A_k$ has an infinite column and does not have a maximum element if $B_k$ has an infinite column.

Suppose first that $A_k$ has an infinite column and that $i$ is the least infinite column of $A_k$.  In this case, by disjointness, $B_k$ does not have an infinite column.  Thus the columns $j < i$ of $A_k$ are finite, and the columns $j \leq i$ of $B_k$ are finite.  So $A_k$ plays in $\mc{M}_{n, 6k+5}$ for finitely many $n$ on account of the columns $j < i$, and $B_k$ plays in $\mc{M}_{n, 6k+5}$ for finitely many $n$ on account of the columns $j \leq i$.  Let $n_0$ be such that for all $n > n_0$, the columns $j < i$ of $A_k$ and the columns $j \leq i$ of $B_k$ never play in $\mc{M}_{n, 6k+5}$.  Then for each $n > n_0$, column $i$ of $A_k$ eventually plays a copy of $\bm{k+1}$ at the top of $\mc{M}_{n, 6k+5}$, after which no further elements are played in $\mc{M}_{n, 6k+5}$.  To see this, suppose that column $i$ of $A_k$ has played in $\mc{M}_{m, 6k+5}$ for all $m$ with $n_0 < m < n$ by some stage $s_0$.  By increasing $s_0$ if necessary, assume it is greater than all the elements of $\{s : \exists j < i\; A_k(\la j, s \ra) = 1\} \cup \{s : \exists j \leq i\; B_k(\la j, s \ra) = 1\}$.  Let $s > s_0$ be least with $A_k(\la i, s \ra)=1$.  Then at stage $s$, $i$ is least such that $A_k(\la i, s \ra)=1$ or $B_k(\la i, s \ra)=1$.  Thus at stage $s$, column $i$ of $A_k$ plays a copy of $\bm{k+1}$ at the top of $\mc{M}_{n, 6k+5}$ if it has not done so already.  Once column $i$ of $A_k$ plays in $\mc{M}_{n, 6k+5}$ for an $n > n_0$, no further elements are ever added to $\mc{M}_{n, 6k+5}$.  No column $j < i$ of $A_k$ or column $j \leq i$ of $B_k$ ever plays in $\mc{M}_{n, 6k+5}$ by choice of $n_0$.  After column $i$ of $A_k$ plays in $\mc{M}_{n, 6k+5}$, no column $j \geq i$ of $A_k$ or of $B_k$ is allowed to play in $\mc{M}_{n, 6k+5}$ by the priority restriction.  Therefore the function
\begin{align*}
\varphi(n) &=
\begin{cases}
\text{the top element of the sequence of length $k+1$} & \text{if $n > n_0$}\\
\text{played by column $i$ of $A_k$ in $\mc{M}_{n, 6k+5}$}\\ \\
\ua & \text{if $n \leq n_0$}
\end{cases}
\end{align*}
is partial computable, and $\varphi(n)$ is the maximum element of $\mc{M}_{n, 6k+5}$ when $n > n_0$.  It follows that $[\varphi]$ is the maximum element of $\prod_{n \in C} \mc{M}_{n, 6k+5}$ by \cref{thm-LosProdParam}.

Suppose instead that $B_k$ has an infinite column and that $i$ is the least infinite column of $B_k$.  In this case, $A_k$ does not have an infinite column.  Arguing as in the previous case, there is an $n_0$ such that for each $n > n_0$, column $i$ of $B_k$ eventually plays a copy of $\omega$ at the top of $\mc{M}_{n, 6k+5}$, after which no further elements are played in $\mc{M}_{n, 6k+5}$.  Therefore the linear order $\mc{M}_{n, 6k+5}$ has no maximum element for all $n > n_0$.  It follows that $\prod_{n \in C} \mc{M}_{n, 6k+5}$ has no maximum element by \cref{thm-LosProdParam}.

We have now computed the linear orders $(\mc{M}_{n,k} : n,k \in \Nb)$.  Taking $\mc{L}_n = \sum_{k \in \Nb}\mc{M}_{n,k}$ for each $n$, we have that
\begin{align*}
\prod\nolimits_C \mc{L}_n \quad=\quad \sum_{k \in \Nb} (\mc{S}_k + \mc{R}_k) + \mc{J},
\end{align*}
Where $\mc{S}_k$ and $\mc{R}_k$ for each $k$ and $\mc{J}$ are defined as above.  We have shown that $\mc{S}_k \;\iso\; (\bm{k+2}) + \eta + (\bm{k+2}) + \eta + (\bm{k+2})$ for each $k$, that $\mc{R}_k = \prod_{n \in C} \mc{M}_{n, 6k+5}$ has a maximum element if $A_k$ has an infinite column, and that $\mc{R}_k$ has no maximum element if $B_k$ has an infinite column.  We need to show that, for every $k$, every non-maximum element of $\mc{R}_k$ has a successor, and that $\mc{J}$ does not have a finite block of size $\geq\! 2$.

Fix $k$ and consider a non-maximum element $[\varphi]$ of $\mc{R}_k$.  Then $\varphi(n)$ is not the maximum element of $\mc{M}_{n, 6k+5}$ for almost every $n \in C$ by \cref{thm-LosProdParam}.  The successor relation on the orders $\mc{M}_{n, 6k+5}$ is uniformly computable.  Therefore, we may partially compute the function
\begin{align*}
\psi(n) =
\begin{cases}
\text{the successor of $\varphi(n)$ in $\mc{M}_{n, 6k+5}$} & \text{if $\varphi(n)\da$ is not maximum in $\mc{M}_{n, 6k+5}$}\\
\ua & \text{otherwise}.
\end{cases}
\end{align*}
By assumption, $\psi(n)$ is defined for almost every $n \in C$, therefore $[\psi]$ is the successor of $[\varphi]$ in $\mc{R}_k$ by \cref{thm-LosProdParam}.

Now we show that $\mc{J}$ does not have a finite block of size $\geq\! 2$.  Consider an element $[\varphi]$ of a summand $\prod_C \mc{M}_{n,\theta(n)}$ of $\mc{J}$, where $[\theta] \in \left|\prod_C \Nb \right|_\nonstd$.  Recall that $\lim_{n \in C}\theta(n) = \infty$ in this case.  By cohesiveness, there is a $d < 6$ such that $\theta(n) \equiv d \mod 6$ for almost every $n \in C$.  If $d$ is $1$ or $3$, then $\mc{M}_{n,\theta(n)} = \Qb$ for almost every $n \in C$.  It follows from \cref{thm-LosProdParam} that $\prod_C \mc{M}_{n,\theta(n)}$ is a (countable) dense linear order without endpoints, so $\prod_C \mc{M}_{n,\theta(n)} \;\iso\; \eta$.  Thus $[\varphi]$ is a block of size $1$ in this case.

Say that an element $x$ of a linear order \emph{has $p$ successors} if there are $x_0, \dots, x_p$ with $x = x_0 \prec x_1 \prec \cdots \prec x_p$, where $x_{i+1}$ is the successor of $x_i$ for each $i < p$.  In this situation, say also that $x_i$ is the \emph{$i$\textsuperscript{th} successor of $x$}.  Define $x$ to \emph{have $p$ predecessors} and the \emph{$i$\textsuperscript{th} predecessor of $x$} in the analogous way.  In the remaining cases $d \in \{0,2,4,5\}$, we show that for every $p$, $[\varphi]$ has either $p$ successors or $p$ predecessors (or both) in $\prod_C \mc{M}_{n,\theta(n)}$.  Therefore $[\varphi]$ is in an infinite block.  Fix $p$.  If $d \in \{0,2,4\}$, then $\mc{M}_{n,\theta(n)}$ is a finite linear order of size $> 2p$ for all sufficiently large $n \in C$ because $\lim_{n \in C}\theta(n) = \infty$.  Similarly, if $d = 5$ then for all sufficiently large $n \in C$, either $\mc{M}_{n,\theta(n)} \iso \omega\cdot\bm{\ell}$ for some $\ell > 0$ or $\mc{M}_{n,\theta(n)} \iso \omega\cdot\bm{\ell} + \bm{q}$ for some $\ell > 0$ and $q > 2p$.  The point is that in all cases, if $n \in C$ is sufficiently large, then every element of $\mc{M}_{n,\theta(n)}$ has either $p$ successors or $p$ predecessors.  Furthermore, the successor relation on $\mc{M}_{n,\theta(n)}$ is uniformly partial computable in $n$.  (The `partial' here is on account of the partiality of $\theta$.)  Therefore ``$\varphi(n)$ has $p$ successors in $\mc{M}_{n,\theta(n)}$'' and ``$\varphi(n)$ has $p$ predecessors in $\mc{M}_{n,\theta(n)}$'' are $\Sigma_1$ predicates.  Thus by cohesiveness, it is either the case that $\varphi(n)$ has $p$ successors in $\mc{M}_{n,\theta(n)}$ for almost every $n \in C$ or that $\varphi(n)$ has $p$ predecessors in $\mc{M}_{n,\theta(n)}$ for almost every $n \in C$ (or both).  Consider the `successors' case.  The `predecessors' case is similar.  Again, the successor relation on $\mc{M}_{n,\theta(n)}$ is uniformly partial computable.  Therefore we may partially compute functions $\psi_1, \dots, \psi_p$, where
\begin{align*}
\psi_i(n) =
\begin{cases}
\text{the $i$\textsuperscript{th} successor of $\varphi(n)$ in $\mc{M}_{n,\theta(n)}$} & \text{if $\varphi(n)$ has $i$ successors in $\mc{M}_{n,\theta(n)}$}\\
\ua & \text{otherwise}
\end{cases}
\end{align*}
for each $1 \leq i \leq p$.  Then each $\psi_i(n)$ is defined for almost every $n \in C$, and
\begin{align*}
[\varphi] \;\prec_{\prod_C \mc{M}_{n,\theta(n)}}\; [\psi_1] \;\prec_{\prod_C \mc{M}_{n,\theta(n)}}\; \cdots \;\prec_{\prod_C \mc{M}_{n,\theta(n)}}\; [\psi_p]
\end{align*}
are $p$ successors of $[\varphi]$ in $\prod_C \mc{M}_{n,\theta(n)}$.  This completes the proof.
\end{proof}

We now explain how to turn the sequence of linear orders defined in the above theorem into a single linear order exhibiting a Tennenbaum-like phenomenon.  The single linear order is essentially the sum of the orders constructed above with additional separators.  We now analyze orders of this form.

\begin{Lemma}\label{lem-SumInterval}
Let $(\mc{L}_n : n \in \Nb)$ be a uniformly computable sequence of linear orders.  Let
\begin{align*}
\mc{S} \quad=\quad \sum_{n \in \Nb} (\{0\} + \mc{L}_n) \quad=\quad \{0\} + \mc{L}_0 + \{0\} + \mc{L}_1 + \cdots.
\end{align*}
Let $f, g \colon \Nb \to \Nb$ be the computable functions given by $f(n) = \la 2n, 0 \ra$ (i.e., the $0$ to the left of $\mc{L}_n$) and $g(n) = \la 2n+2, 0 \ra$ (i.e., the $0$ to the right of $\mc{L}_n$).  Let $C$ be a cohesive set.  Then the interval $([f], [g])$ in $\prod_C \mc{S}$ is isomorphic to $\prod_C \mc{L}_n$.
\end{Lemma}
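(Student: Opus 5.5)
The plan is to exhibit an explicit map and check that it is an order-isomorphism. First I fix the presentation of $\mc{S}$: by the generalized sum convention, $\mc{S} = \sum_{m \in \Nb} \mc{K}_m$ with $\mc{K}_{2n} = \{0\}$ and $\mc{K}_{2n+1} = \mc{L}_n$. Its elements are pairs $\la m, x\ra$, ordered first by $m$ and then within the summand. So $f(n) = \la 2n, 0\ra$ and $g(n) = \la 2n+2, 0\ra$ are the separators immediately surrounding the copy $\{2n+1\}\times|\mc{L}_n|$ of $\mc{L}_n$.

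Next I define $F \colon \prod_C \mc{L}_n \to ([f],[g])$ by $F([\varphi]) = [\hat\varphi]$, where $\hat\varphi(n) \keq \la 2n+1, \varphi(n)\ra$. This map is partial computable and $C \subseteq^* \dom(\hat\varphi)$. It is well defined and injective, because $\varphi(n) = \psi(n)$ if and only if $\hat\varphi(n) = \hat\psi(n)$. It preserves and reflects order: $\varphi(n) \prec_{\mc{L}_n} \psi(n)$ if and only if $\hat\varphi(n) \prec_{\mc{S}} \hat\psi(n)$, and the order relations in both cohesive products are evaluated by the $C \subseteq^*$ condition of \cref{def-CohProd}. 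The image lies in $([f],[g])$ because $f(n) \prec_\mc{S} \hat\varphi(n) \prec_\mc{S} g(n)$ whenever $\hat\varphi(n)\da$.

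The main step is surjectivity. Let $[\psi] \in \prod_C \mc{S}$ with $[f] \prec [\psi] \prec [g]$. Then by \cref{def-CohProd}, for almost every $n \in C$, $\psi(n)\da$ and $f(n) \prec_\mc{S} \psi(n) \prec_\mc{S} g(n)$. By the definition of $\mc{S}$, the only elements strictly between $\la 2n,0\ra$ and $\la 2n+2,0\ra$ are those of the form $\la 2n+1, x\ra$ with $x \in |\mc{L}_n|$. I define the partial computable function
\[
\varphi(n) \keq
\begin{cases}
\pi_1(\psi(n)) & \text{if } \psi(n)\da \text{ and } \pi_0(\psi(n)) = 2n+1,\\
\text{undefined} & \text{otherwise.}
\end{cases}
\]
Then $\varphi(n) \in |\mc{L}_n|$ whenever it converges, $C \subseteq^* \dom(\varphi)$, and $\hat\varphi =_C \psi$. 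Hence $F([\varphi]) = [\psi]$. This step only needs care with partiality (the domain condition requires a computable check of $\pi_0(\psi(n)) = 2n+1$); it requires no cohesiveness beyond the definitions. Combining injectivity, order preservation and surjectivity gives the isomorphism.

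Alternatively, one could obtain the result from \cref{thm-IsoGenSum} with $\mc{K}_n = \Nb$, identifying the summand indexed by $[\theta]$ with $\theta(n) = 2n+1$ (a non-standard element) as $\prod_C \mc{L}_n$. One would also need to note that $[n\mapsto 2n]$ and $[n \mapsto 2n+2]$ are its immediate neighbours in $\prod_C \Nb$. That neighbour fact itself follows from \cref{thm-LosProdParam}, since the successor is computable. The direct argument above is simpler, so I will use it.
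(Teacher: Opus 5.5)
Your proposal is correct and follows essentially the same route as the paper. The paper also pairs the map $\psi \mapsto \la 2n+1, \psi(n)\ra$ with its inverse $\varphi \mapsto \pi_1(\varphi(n))$, restricted to $\pi_0(\varphi(n)) = 2n+1$, and checks that the two correspondences are mutually inverse and order-preserving; your check of injectivity, order reflection, and surjectivity amounts to the same verification.
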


\begin{proof}
The linear order $\mc{S}$ has domain $\{\la 2n, 0 \ra : n \in \Nb\} \cup \{\la 2n+1, \ell \ra : n \in \Nb \,\land\, \ell \in |\mc{L}_n|\}$ and is ordered in the obvious way, so it is easy to identify each $\mc{L}_n$ with its corresponding copy $\{2n+1\} \times |\mc{L}_n|$ in $\mc{S}$.

Let $[\varphi]_{\prod_C \mc{S}}$ be an element of $\prod_C \mc{S}$ with $[f] \prec_{\prod_C \mc{S}} [\varphi] \prec_{\prod_C \mc{S}} [g]$.  Then  $(\forae n \in C)(f(n) \prec_{\mc S} \varphi(n) \prec_{\mc S} g(n))$, so $(\forae n \in C)(\la 2n, 0 \ra \prec_{\mc S} \varphi(n) \prec_{\mc S} \la 2n+2, 0 \ra)$, so $(\forae n \in C)(\varphi(n) \in \{2n+1\} \times |\mc{L}_n|)$.  Let 
\begin{align*}
\psi(n) =
\begin{cases}
\pi_1(\varphi(n)) & \text{if $\pi_0(\varphi(n)) = 2n+1$}\\
\ua & \text{otherwise}.
\end{cases}
\end{align*}
Then $[\varphi]_{\prod_C \mc{S}}$ corresponds to the element $[\psi]_{\prod_C \mc{L}_n}$ of $\prod_C \mc{L}_n$.  Conversely, let $[\psi]_{\prod_C \mc{L}_n}$ be an element of $\prod_C \mc{L}_n$.  Thus $C \subseteq^* \dom(\psi)$ and $\forall n\, (\psi(n)\da \,\imp\, \psi(n) \in |\mc{L}_n|)$.  Let $\varphi(n) \simeq \la 2n+1, \psi(n) \ra$ so that $[\varphi]_{\prod_C \mc{S}}$ is an element of $\prod_C \mc{S}$.  Then $(\forae n \in C)(\la 2n, 0 \ra \prec_{\mc S} \varphi(n) \prec_{\mc S} \la 2n+2, 0 \ra)$, so in fact $[\varphi]_{\prod_C \mc{S}}$ is an element of the interval $([f], [g])$ in $\prod_C \mc{S}$ corresponding to $[\psi]_{\prod_C \mc{L}_n}$.  These correspondences are inverses of each other and preserve order, so the interval $([f], [g])$ in $\prod_C \mc{S}$ is isomorphic to $\prod_C \mc{L}_n$.
\end{proof}

In the statement of \cref{lem-SumInterval}, it would be more pleasing to write $\mc{S} = \sum_{n \in \Nb} (\bm{1} + \mc{L}_n)$.  The reason for the somewhat awkward formulation is to give names to the singleton breakpoints to describe the functions $f$ and $g$.
Regardless, this construction yields our desired result.

\begin{Theorem}\label{thm-LONonRec}
There is a computable linear order $\mc{L}$ such that for every cohesive set $C$ and every presentation $\mc{P}$ of $\prod_C \mc{L}$, the double-jump $\mc{P}''$ has $\pa$-degree relative to $0''$.  Thus $\prod_C \mc{L}$ has no computable presentation.
\end{Theorem}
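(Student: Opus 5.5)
We take $(\mc{L}_n : n \in \Nb)$ to be the uniformly computable sequence from \cref{thm-ProdNonRec}, and we let $\mc{L}$ be the computable order $\mc{S} = \sum_{n \in \Nb}(\{0\} + \mc{L}_n)$ from \cref{lem-SumInterval}.

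Fix a cohesive set $C$ and a presentation $\mc{P}$ of $\prod_C \mc{L}$, and let $f(n) = \la 2n, 0 \ra$ and $g(n) = \la 2n+2, 0 \ra$ be as in \cref{lem-SumInterval}. That lemma says the interval $([f],[g])$ of $\prod_C \mc{L}$ is isomorphic to $\prod_C \mc{L}_n$. Let $a, b \in |\mc{P}|$ be the elements corresponding to $[f]$ and $[g]$. The set $\{x \in |\mc{P}| : a \prec_\mc{P} x \prec_\mc{P} b\}$ is computable from $\mc{P}$ with the two parameters $a,b$, which do not affect the degree. Restricting $\prec_\mc{P}$ to this set therefore gives a $\mc{P}$-computable presentation $\mc{Q}$ of $\prod_C \mc{L}_n$, with domain a subset of $\Nb$. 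The subset is non-empty because, for example, $\mc{S}_0$ lies inside it.

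By \cref{thm-ProdNonRec}, $\mc{Q}''$ has $\pa$-degree relative to $0''$. Since $\mc{Q} \leq_T \mc{P}$, we have $\mc{Q}'' \leq_T \mc{P}''$. The property of computing a function that is $\dnc_2$ relative to $0''$ is upward closed under $\leq_T$, so $\mc{P}''$ also has $\pa$-degree relative to $0''$. For the final clause: if $\mc{P}$ were computable, then $\mc{P}'' \equiv_T 0''$ would compute a function that is $\dnc_2$ relative to $0''$. That is impossible, because any $0''$-computable $h$ equals $\varphi^{0''}_e$ for some $e$ with $\varphi^{0''}_e(e)\da$, and then $h(e) = \varphi^{0''}_e(e)$, so $h$ is not $\dnc_2$ relative to $0''$.

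The main point to check is that the relativized theorem is really applied to a presentation in the sense required. \Cref{thm-ProdNonRec} quantifies over all presentations, with no computability assumption, so any $\mc{Q}$ isomorphic to $\prod_C \mc{L}_n$ qualifies. The only other thing to verify is that \cref{lem-SumInterval} applies to an arbitrary cohesive $C$, which it does as stated. Beyond that, the argument is routine.
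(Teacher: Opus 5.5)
Your proposal is correct and follows essentially the same route as the paper: take $\mc{L} = \sum_{n \in \Nb}(\{0\} + \mc{L}_n)$ with $(\mc{L}_n)$ from \cref{thm-ProdNonRec}, use \cref{lem-SumInterval} to carve the $\mc{P}$-computable interval $(a,b)$ isomorphic to $\prod_C \mc{L}_n$, and conclude via $\mc{Q}'' \leq_T \mc{P}''$. Your added details, namely the non-emptiness of the domain and the diagonal argument that $0''$ computes no function that is $\dnc_2$ relative to $0''$, are both sound and simply make explicit what the paper leaves implicit.
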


\begin{proof}
Let $(\mc{L}_n : n \in \Nb)$ be the uniformly computable sequence of linear orders from \cref{thm-ProdNonRec}.  So for every cohesive set $C$, the double-jump $\mc{Q}''$ of every presentation $\mc{Q}$ of the cohesive product $\prod_C \mc{L}_n$ has $\pa$-degree relative to $0''$.  Let $\mc{L} = \sum_{n \in \Nb} (\{0\} + \mc{L}_n)$, and let $f, g \colon \Nb \to \Nb$ be the computable functions $f(n) = \la 2n, 0 \ra$ and $g(n) = \la 2n+2, 0 \ra$ as in the statement of \cref{lem-SumInterval}.  Let $C$ be a cohesive set, and let $\mc{P}$ be a presentation of $\prod_C \mc{L}$.  Let $a, b \in |\mc{P}|$ be the elements corresponding to $[f]$ and $[g]$ respectively.  Then the interval $(a,b)$ in $\mc{P}$ is isomorphic to $\prod_C \mc{L}_n$ by \cref{lem-SumInterval}.  Thus $\mc{P}$ computes a presentation $\mc{Q}$ of $\prod_C \mc{L}_n$, and therefore $\mc{P}'' \geqT \mc{Q}''$, which has $\pa$-degree relative to $0''$.  So $\mc{P}''$ has $\pa$-degree relative to $0''$.
\end{proof}

\begin{Corollary}
There is a computable structure $\mc{L}$ such that for every cohesive set $C$, $\prod_C \mc{L}$ has no degree. In other words, $\dgsp(\prod_C \mc{L})$ has no minimum element.
\end{Corollary}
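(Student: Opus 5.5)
We take $\mc{L}$ to be the computable linear order from \cref{thm-LONonRec}; the corollary should then follow from Richter's \cref{richter}. Fix a cohesive set $C$. By \cref{thm-LosGeneral}, being a linear order is expressible by a $\Pi_1$ (hence $\Delta_3$) sentence true in $\mc{L}$. Therefore $\prod_C \mc{L}$ is a linear order, and it is countable because there are only countably many partial computable functions. Thus $\dgsp(\prod_C \mc{L})$ is nonempty. Indeed, $\prod_C \mc{L}$ has some presentation computable from a sufficiently complex oracle, for example $C''$, by relativizing the argument of \cref{prop-Delta3Pres}. In any case some presentation exists.

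Next we show that $\prod_C \mc{L}$ has no computable copy. Suppose $\mc{P}$ were a computable presentation. By \cref{thm-LONonRec}, $\mc{P}''$ would have $\pa$-degree relative to $0''$. Since $\mc{P}'' \equiv_T 0''$, this would mean $0''$ computes a function that is $\dnc_2$ relative to $0''$. Call this function $h$, and let $e$ be an index with $\varphi_e^{0''}(e) = 1 - h(e)$. Then $\varphi_e^{0''}(e)$ converges and differs from $h(e)$, contradicting that $h$ is $\dnc_2$ relative to $0''$.

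Finally, \cref{richter} says that a linear order has a degree, that is, its degree spectrum has a minimum element, if and only if it has a computable copy. Since $\prod_C \mc{L}$ is a linear order without a computable copy, $\dgsp(\prod_C \mc{L})$ has no minimum element. As $C$ was an arbitrary cohesive set, this proves the corollary.

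There is no serious obstacle here. The only points needing care are that the cohesive power is again a linear order, so that Richter's theorem applies, and the relativized diagonalization showing that $0''$ does not have $\pa$-degree relative to itself.
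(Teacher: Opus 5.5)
Your route is the paper's: take $\mc{L}$ from \cref{thm-LONonRec}, note that $\prod_C \mc{L}$ is a countable linear order with no computable copy, and apply Richter's \cref{richter}. Your checks that $\prod_C \mc{L}$ is a linear order (universal axioms plus \cref{thm-LosGeneral}) and that it is countable are fine.

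The one step you spell out beyond the paper, that $0''$ does not have $\pa$-degree relative to itself, is wrong as written. By the paper's definition, $h$ being $\dnc_2$ relative to $0''$ means $h(e) \neq \varphi_e^{0''}(e)$ whenever $\varphi_e^{0''}(e)\da$. If you choose $e$ with $\varphi_e^{0''}(e) = 1-h(e)$, you get exactly such an inequality, so $h$ satisfies the $\dnc_2$ condition at $e$ and there is no contradiction.

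You need the opposite choice. Since $h \leq_{\mathrm{T}} 0''$, fix $e$ with $\varphi_e^{0''} = h$. Then $\varphi_e^{0''}(e)\da = h(e)$, which violates the $\dnc_2$ condition at $e$. With that one-line fix your argument is complete and coincides with the paper's, which simply asserts that $0''$ is not of $\pa$-degree relative to itself.
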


\begin{proof}
The $\mc{L}$ described above has the property that, for every cohesive set $C$, $\mathbf{0} \notin \dgsp(\prod_C \mc{L})$.
It follows from~\cref{richter} that for every cohesive set $C$,  $\dgsp(\prod_C \mc{L})$ has no minimum element.
\end{proof}

\section{Further questions}\label{sec-q}

In addition to Questions~\ref{q-Pi3} and~\ref{q-omega} from the introduction, a number of further questions concerning the degree spectra of cohesive powers come to mind.  We relay a few of them here.

\begin{FQuestions*}\
\begin{enumerate}[(1)]
\setcounter{enumi}{4}
\item What other Turing degrees can be realized as degrees of cohesive powers in the sense of \cref{cor-degDJ}?

\medskip

\item What can be encoded into cohesive powers of linear orders?  Which degree spectra of linear orders are possible to achieve in this way?

\medskip

\item Is \cref{prop-Delta3Pres} best possible when $k>2$? Is there a structure $\mathcal{G}$ such that for every strictly $\Delta_k$ cohesive set $C$, $\prod_C\mc{G}$ has no $\Delta_k$ copies?
\end{enumerate}
\end{FQuestions*}

\section*{Acknowledgements}
We thank Leszek Ko{\l}odziejczyk and Patrick Lutz for helpful discussions.

\section*{Funding}
This project was partially supported by EPSRC grant EP/T031476/1.  Part of this research was performed while the second author was visiting the Simons Laufer Mathematical Sciences Institute (SLMath, formerly MSRI) during spring 2024, which was supported by the National Science Foundation (Grant No. DMS-1928930).

\bibliographystyle{amsplain}
\bibliography{TennenbaumForCohesivePowers}

\vfill

\end{document}